\renewcommand{\p@enumii}{}
\def\@enum@{\list{\csname label\@enumctr\endcsname}%
{\usecounter{\@enumctr}\def\makelabel##1{
\normalfont\ignorespaces\emph{{##1}~}}
\setlength{\labelsep}{3pt}
\setlength{\parsep}{0pt}
\setlength{\itemsep}{0pt}
\setlength{\leftmargin}{0pt}
\setlength{\labelwidth}{0pt}
\setlength{\listparindent}{\parindent}
\setlength{\itemsep}{0pt}
\setlength{\itemindent}{0pt}
\topsep=3pt plus 1pt minus 1 pt}}
\renewcommand{\epsilon}{\ensuremath{\varepsilon}}
\renewcommand{\phi}{\ensuremath{\varphi}}
\renewcommand{\to}{\ensuremath{\longrightarrow}}
\newcommand{\C}{\ensuremath{\mathbb C}}
\newcommand{\N}{\ensuremath{\mathbb N}}
\newcommand{\Z}{\ensuremath{\mathbb Z}}
\newcommand{\sn}[1][n]{\ensuremath{S_{{#1}}}}
\newcommand{\largeleft}{\mbox{$\left(\raisebox{-4mm}{}\right.$}}
\newcommand{\largeright}{\mbox{$\left.\raisebox{-4mm}{}\right)$}}
\renewcommand{\ker}[1]{\ensuremath{\operatorname{\text{Ker}}\left({#1}\right)}}
\newcommand{\im}[1]{\ensuremath{\operatorname{\text{Im}}\left({#1}\right)}}
\newcommand{\aut}[1]{\ensuremath{\operatorname{\text{Aut}}\left({#1}\right)}}
\newcommand{\autc}[1]{\ensuremath{\operatorname{\text{Aut}_c}\left({#1}\right)}}
\newcommand{\barpn}[1][n]{\ensuremath{\overline{P}_{#1}}}
\def\@map#1#2[#3]{\mbox{$#1 \colon\thinspace #2 \to #3$}}
\def\map#1#2{\@ifnextchar [{\@map{#1}{#2}}{\@map{#1}{#2}[#2]}}
\newcommand{\brak}[1]{\ensuremath{\left\{ #1 \right\}}}
\newcommand{\ang}[1]{\ensuremath{\left\langle #1\right\rangle}}
\newcommand{\set}[2]{\ensuremath{\left\{#1 \,\mid\, #2\right\}}}
\newcommand{\setangl}[2]{\ensuremath{\ang{\left. #1 \,\right\rvert \, #2}}}
\newcommand{\ord}[1]{\ensuremath{\left\lvert #1\right\rvert}}
\newcommand{\setl}[2]{\ensuremath{\brak{\left. #1 \,\right\rvert \, #2}}}
\newcommand{\sign}{{\rm sign}}
\newtheoremstyle{theoremm}{}{}{\itshape}{}{\scshape}{.}{ }{}
\theoremstyle{theoremm}
\newtheorem{thm}{Theorem}
\newtheorem{lem}[thm]{Lemma}
\newtheorem{prop}[thm]{Proposition}
\newtheoremstyle{remark}{}{}{}{}{\scshape}{.}{ }{}
\theoremstyle{remark}
\newtheorem{rem}[thm]{Remark}
\newtheoremstyle{comment}{}{}{\bfseries}{}{\bfseries}{:}{ }{}
\theoremstyle{comment}
\newcommand{\reth}[1]{Theorem~\protect\ref{th:#1}}
\newcommand{\relem}[1]{Lemma~\protect\ref{lem:#1}}
\begin{document}

\title{The R$_\infty$ property for pure Artin braid groups}

\author{KAREL~DEKIMPE\\
KU Leuven Campus Kulak Kortrijk,\\
Etienne Sabbelaan 53, 8500 Kortrijk, Belgium.\\
e-mail:~\texttt{karel.dekimpe@kuleuven.be}\vspace*{4mm}\\
DACIBERG~LIMA~GON\c{C}ALVES\\
Departamento de Matem\'atica - IME-USP,\\
Rua~do~Mat\~ao~1010~CEP:~05508-090  - S\~ao Paulo - SP - Brazil.\\
e-mail:~\texttt{dlgoncal@ime.usp.br}\vspace*{4mm}\\
OSCAR~OCAMPO~\\
Universidade Federal da Bahia,\\
Departamento de Matem\'atica - IME,\\
Av. Adhemar de Barros~S/N~CEP:~40170-110 - Salvador - BA - Brazil.\\
e-mail:~\texttt{oscaro@ufba.br}
}

\date{\today}

\maketitle

\begin{abstract}
In this paper we prove that all pure Artin braid groups $P_n$ ($n\geq 3$) have the $R_\infty$ property. In order to obtain  this result, we analyse  the naturally induced morphism $\aut{P_n} \to  \aut{\Gamma_2 (P_n)/\Gamma_3(P_n)}$ which turns out to factor through a representation $\rho\colon \sn[n+1] \to \aut{\Gamma_2 (P_n)/\Gamma_3(P_n)}$. We can then use representation theory of the symmetric groups to show that any automorphism $\alpha$ of $P_n$ acts on the free abelian group $\Gamma_2 (P_n)/\Gamma_3(P_n)$ via a matrix with an eigenvalue equal to 1. This allows us to conclude that the Reidemeister number $R(\alpha)$ of $\alpha$ is $\infty$.
 \end{abstract}

	\let\thefootnote\relax\footnotetext{2010 \emph{Mathematics Subject Classification}. Primary: 20E36; Secondary: 20F36, 20E45, 20C30.
		
		\emph{Key Words and Phrases}. Braid group, Pure braid group, R$_\infty$ property, Nilpotent groups, Representation theory}

\section{Introduction}

Consider a group $G$ and an endomorphism $\alpha$ of $G$. We say that two elements $x$ and $y$ of $G$ are twisted conjugate (via $\alpha$) if and only if there exists a $z\in G$ such that $x = z y \alpha(z)^{-1}$. It is easy to see that the relation of being twisted conjugate is an equivalence relation and the number of equivalence classes (also referred to as Reidemeister classes) is called the Reidemeister number $R(\alpha)$ of $\alpha$. This Reidemeister number is either a positive integer or $\infty$.

Reidemeister numbers find their origins in algebraic topology and to be more precise in Nielsen--Reidemeister fixed point theory. Here one is interested in the number of fixed point classes of a selfmap $f$ of a space $X$. This number is called the Reidemeister number $R(f)$ of the map $f$, and one can show that $R(f)=R(f_\ast)$, where $f_\ast\colon \pi_1(X) \to \pi_1(X) $ is the induced endomorphism on the fundamental group $\pi_1(X)$ of $X$.

There is currently a growing interest in the study of groups $G$  that have the $R_\infty$ property, these are groups for which $R(\alpha)=\infty$ for any automorphism $\alpha \in \aut{G}$. The study of groups with that  property was initiated by Fel'shtyn and Hill \cite{FH}.

Since the beginning of this century many authors have been studying this property  and for several families of groups it is known whether or not they have the $R_\infty$ property. To list some examples of groups with the $R_\infty$ property, we mention the non-elementary Gromov hyperbolic groups \cite{F,LL}, most of the Baumslag--Solitar groups \cite{FG1}  and groups quasi--isometric to Baumslag--Solitar groups \cite{TW},\
 generalized Baumslag--Solitar groups \cite{L}, many linear groups \cite{FN,N}, several families of lamplighter groups \cite{GW,T} ,  \ldots

In \cite{FG} it was shown that the Artin braid groups $B_n$ also have property $R_\infty$. The goal of this paper is to show that the same holds for the pure braid groups $P_n$. The technique that we will use is to look at a suitable quotient of $P_n$. Indeed, if $\alpha$ is an automorphism of a group $G$ and $N$ is a normal subgroup of $N$ with $\alpha(N)=N$ (e.g.\ when $N$ is a characteristic subgroup of $G$) then $\alpha$ induces an automorphism $\bar\alpha$ of $G/N$. It is easy to see that $R(\alpha) > R(\bar\alpha)$, so if 
$R(\bar\alpha)= \infty$, then also $R(\alpha)=\infty$. In fact, for $n\geq 5$ we will show that for any automorphism $\alpha$ of $P_n$, the induced automorphism on $P_n/\Gamma_3(P_n)$ has an infinite Reidemeister number.  
Here  $\Gamma_i(P_n)$ stands for the $i$-th term of the lower central series (see the following section for the exact definition of the terms of the lower central series). To be more precise, we will show that the induced automorphism on the subquotient $\Gamma_2(P_n)/\Gamma_3(P_n)$ has infinite Reidemeister number  , and this will also allow us to conclude that $R(\alpha)=\infty$. 

In the next section we start by recalling some basic facts about Artin braid groups and we describe in detail the induced automorphisms on $\Gamma_2(P_n)/\Gamma_3(P_n)$ coming from automorphisms of $P_n$. It turns out that these form a finite group of automorphisms which is isomorphic to $\sn[n+1]$, the symmetric group on $n+1$ letters. In the last section, we then exploit this fact, using results from representation theory of $\sn$, to prove the $R_\infty$ property for $P_n$ (with $n\geq 5$). The cases $P_3$ and $P_4$ are treated separately. 

\subsection*{Acknowledgments}
The first author was supported by long term structural funding – Methusalem grant of the Flemish Government.
The second author was partially supported by  the Projeto Tem\'atico-FAPESP Topologia Alg\'ebrica, Geom\'etrica e Diferencial 2016/24707-4 (Brazil).
The third author  was partially supported by Capes/Programa Capes-PrInt/ Processo n\'umero 88881.309857/2018-01.

\section{Pure braid groups and their automorphisms}\label{sec-autos}

Let us start by very briefly recalling some facts about Artin braid groups (see~\cite{Ha} for more details). It is well known that the Artin braid group $B_{n}$ can be generated by the so-called elementary braids $\sigma_{1},\ldots,\sigma_{n-1}$ that are subject to the following relations:
\begin{equation}\label{eq:artin1}
\left\{ \begin{gathered}
\text{$\sigma_{i} \sigma_{j} = \sigma_{j}  \sigma_{i}$ for all $1\leq i<j\leq n-1$ such that $\ord{i-j}\geq 2$}\\
\text{$\sigma_{i+1} \sigma_{i} \sigma_{i+1} =\sigma_{i} \sigma_{i+1} \sigma_{i}$ for all $1\leq i\leq n-2$.}
\end{gathered}\right.
\end{equation}
The homomorphism $\map{\sigma}{B_{n}}[\sn]$ mapping $\sigma_{i}$ to the transposition $(i,i+1)$ is surjective and the kernel of $\sigma$ is the pure Artin braid group $P_{n}$ on $n$ strings. So there is a short exact sequence:
\begin{equation}\label{eq:sespn}
1 \to P_n \to  B_n \stackrel{\sigma}{\to} \sn \to 1.
\end{equation}
There is a set of generators $\brak{A_{i,j}}_{1\leq i<j\leq n}$ of $P_n$, where:
\begin{equation}\label{eq:defaij}
A_{i,j}=\sigma_{j-1}\cdots \sigma_{i+1}\sigma_{i}^{2} \sigma_{i+1}^{-1}\cdots \sigma_{j-1}^{-1}.
\end{equation}
For notational reasons it will be handy to set $A_{j,i}= A_{i,j}$ for $1\leq i < j \leq n$.

\medskip

Now, let $G$ be any group. For $g,h\in G$ we define the commutator of $g$ and $h$ as $[g,h]=ghg^{-1}h^{-1}$ and if $H,K$ are subgroups of $G$ then  $[H,K]=\setangl{[h,k]}{h\in H,\, k\in K}$. The terms of the \textit{lower central series} $\brak{\Gamma_i(G)}_{i\in \N}$ of $G$ are defined iteratively by  letting $\Gamma_1(G)=G$ and $\Gamma_{i+1}(G)=[G,\Gamma_i(G)]$ for all  $i\in \N$. The groups $\Gamma_i(G)$ are characteristic subgroups of $G$.

\medskip

It is known that the consecutive quotients $\Gamma_i(P_n)/\Gamma_{i+1}(P_n)$ are free Abelian groups of finite rank for all $i$ (see e.g.\ \cite{FR}).
In particular $P_n/\Gamma_2(P_n)$ is isomorphic to $\Z^{n(n-1)/2}$, and a basis of $P_n/\Gamma_2(P_n)$ is given by the images of the generators $\brak{A_{i,j}}_{1\leq i<j\leq n}$ in the quotient $P_n/\Gamma_2(P_n)$.

\medskip

To prove the $R_\infty$ property for the groups $P_n$ we have to study the automorphims of $P_n$. We follow the description of $\aut{P_n}$ that can be found in a paper by  Bardakov, Neshchadim and Singh \cite{BNS}. Throughout the rest of this section we assume that $n\geq 4$. 

It is known that the center $Z(P_n)$ of $P_n$ is an infinite cyclic subgroup of $P_n$ and that  $P_n \cong Z(P_n)\times \barpn$, where $\barpn=P_n/Z(P_n)$. The center $Z(P_n)$ is generated by the full twist braid
$A_{1,2} A_{1,3} A_{2,3} \cdots A_{1,n} A_{2,n} \cdots A_{n-1,n}$ and is denoted by $z_n$ in \cite{BNS}.
Since $Z(P_n)$ is characteristic in $P_n$, there is a homomorphism $\phi\colon \aut{P_n}\to \aut{\barpn}$ and a whole series of homomorphisms $\phi_i\colon \aut{P_n}\to \aut{\frac{\Gamma_{i}(\barpn)}{\Gamma_{i+1}(\barpn)}}$ for all $i\geq 1$.

\begin{rem}\label{technique}
Note that if $\alpha\in \aut{P_n}$ and $R(\phi(\alpha))=\infty$ or $R(\phi_i(\alpha))=\infty$ for some $i$, then $R(\alpha)=\infty$. Indeed, if $R(\phi_i(\alpha))=\infty$ then also for the induced automorphism $\bar\alpha \in\aut{\frac{\overline P_n}{\Gamma_{i+1}(\overline P_n)}}= \aut{\frac{P_n}{Z(P_n) \Gamma_{i+1}(P_n)}}$ it holds that $R(\bar \alpha)= \infty$ (\cite[Lemma 2.2]{DG}) and hence $R(\alpha) =\infty$.\\
 We will use this technique for $i=2$ to prove the $R_\infty$ property for $P_n$, for $n\geq 4$.
\end{rem}

An automorphism $\alpha \in \aut{P_n}$ is called central if $\alpha$ induces the identity on $\barpn$ (so if $\phi(\alpha)=1$). The subgroup of $\aut{P_n}$ of all central automorphisms is denoted by $\autc{P_n}$.
It is known that $\aut{P_n} \cong \autc{P_n} \rtimes \aut{\barpn}$ and so for our purposes, it suffices to understand $\phi(\aut{P_n})=\phi(\aut{\barpn})$.

In \cite[page 6]{BNS} an explicit list of automorphisms  $\omega_1, \omega_2, \ldots, \omega_n$ and $\epsilon$ of $\aut{P_n}$ is given and it is shown that the $\phi(\omega_i)$ ($i=1,\ldots , n$) together with  $\phi(\epsilon)$ are generators of $\aut{\barpn}$. Let  $\overline{\omega}_i=\phi_1(\omega_i)$ and $\overline{\epsilon}=\phi_1(\epsilon)$. So $\overline{\omega}_i$, $\overline{\epsilon}$ are the induced morphisms on $\frac{\Gamma_{1}(\barpn)}{\Gamma_{2}(\barpn)}$. 

First of all note that $\overline{\epsilon}(\bar{A}_{i,j})=-\bar{A}_{i,j}$ for $1\leq i<j\leq n$,  where we use $\bar{A}_{i,j}$ to denote the images of the $A_{i,j}$ under the natural projection of $P_n$ to  $\frac{\Gamma_{1}(\barpn)}{\Gamma_{2}(\barpn)}$. This follows easily from the formula of $\epsilon$ on page~6 of \cite{BNS}. 

In \cite[Equation~(2.1)]{BNS} we find the relations (without bars)
\begin{align*}
\overline{\omega}_i\overline{\omega}_j=\overline{\omega}_j\overline{\omega}_i \textrm{ for } \left.i-j\right. > 1\\
\overline{\omega}_i\overline{\omega}_{i+1}\overline{\omega}_i=\overline{\omega}_{i+1}\overline{\omega}_i\overline{\omega}_{i+1}\\
\overline{\omega}_1\cdots \overline{\omega}_{n-1}\overline{\omega}_n^2\overline{\omega}_{n-1}\cdots \overline{\omega}_1=1\\
(\overline{\omega}_1\overline{\omega}_2\cdots \overline{\omega}_n)^{n+1}=1\\
(\overline{\epsilon}\overline{\omega}_i)^2=1 \textrm{ and } \overline{\epsilon}^2=1.
\end{align*}
Of course, since $\overline{\epsilon}$ is equal to minus the identity on $\frac{\Gamma_{1}(\barpn)}{\Gamma_{2}(\barpn)}$ we also have that $\overline{\epsilon}\overline{\omega}_i=\overline{\omega}_i\overline{\epsilon}$. 
It then follows easily that we have the relations
\begin{align*}
\overline{\omega}_i\overline{\omega}_j=\overline{\omega}_j\overline{\omega}_i \textrm{ for } \left.i-j\right. > 1\\
\overline{\omega}_i\overline{\omega}_{i+1}\overline{\omega}_i=\overline{\omega}_{i+1}\overline{\omega}_i\overline{\omega}_{i+1}\\
\overline{\omega}_i^2=1\\
\overline{\epsilon}\overline{\omega}_i=\overline{\omega}_i\overline{\epsilon}=1 \\ 
\overline{\epsilon}^2=1.
\end{align*}
and hence
\begin{equation*}
\phi_1(\aut{P_n})=\ang{\overline{\omega}_1, \overline{\omega}_2, \ldots, \overline{\omega}_n, \overline{\epsilon}}\cong \sn[n+1]\times \Z_2
\end{equation*}
where $\sn[n+1]=\ang{\overline{\omega}_1, \overline{\omega}_2, \ldots, \overline{\omega}_n}$ and $\Z_2=\ang{\overline{\epsilon}}$.

At this point we want to remark that there are indeed no extra relations because otherwise $\ang{\overline{\omega}_1, \overline{\omega}_2, \ldots, \overline{\omega}_n, \overline{\epsilon}}$ would be a quotient of $\sn[n+1]\times \Z_2$, which could only be $\Z_2\times \Z_2$ or $\Z_2$ (since $n+1\geq 5$) and using the explicit formula for the $\overline{\omega}_i$  that we will give in a moment, it is easy to see that this is not the case. Indeed, let $k\in \{1,2,\ldots,n-1\}$ and let $\tau_k=(k,\,k+1)\in \sn$ be a transposition, then the formulas on page~6 of \cite{BNS} imply that 
\begin{equation}\label{images:phi1}
\overline{\omega}_k(\bar A_{i,j}) = \bar A_{\tau_k(i), \tau_k(j)} \mbox{  \ \ for $1 \leq k < n$.} 
\end{equation}
The description of $\overline{\omega}_n$ is a little bit more complicated, but actually we will not need it in this paper.

By induction on $i$, it is easy to see that $\ker{\phi_1}\subseteq \ker{\phi_i}$ for all $i=2,3,\ldots$ (in fact the induced automorphism $\phi_i(\alpha)$ is completely determined by $\varphi_1(\alpha)$ as we will see explicitly for  $i=2$) and so 
it follows that for all $\phi_i\colon \aut{P_n}\to \aut{\frac{\Gamma_{i}(\barpn)}{\Gamma_{i+1}(\barpn)}}$ it holds that $\im{\phi_i}$ is a finite group.

\medskip

Now let us describe $\phi_2$ in more detail. The group $\Gamma_2(P_n)/\Gamma_3(P_n)=
\Gamma_2(\overline P_n)/\Gamma_3(\overline P_n)$ is free abelian of rank $\displaystyle {n \choose 3}$ and
in \cite[Page~170]{GGO2} it was proven that the set
\begin{equation}\label{eq:basis1}
\mathcal{B}=\setl{\alpha_{i,j,k}}{1\leq i<j<k\leq n},
\end{equation}
where $\alpha_{i,j,k}$ is the natural projection of $[A_{i,j}, A_{j,k}]$ into $\frac{\Gamma_2(\overline P_n)}{\Gamma_3(\overline P_n)}$ is a basis of the free abelian group $\frac{\Gamma_2(\overline P_n)}{\Gamma_3(\overline P_n)}=\frac{\Gamma_2(P_n)}{\Gamma_3( P_n)}$. Moreover, also the following relations hold for all $1\leq i < j< k \leq n$:
\begin{equation}\label{relations:alpha}
\overline{ [A_{i,k}, A_{j,k}]}  = \alpha_{i,j,k}^{-1},\;
\overline{[A_{i,j}, A_{i,k}]}  = \alpha_{i,j,k}^{-1},\mbox{ and }
\overline{[A_{j,k}, A_{i,j}]}  = \alpha_{i,j,k}^{-1},
\end{equation}
where we now use a bar to indicated the natural projection of an element to $\frac{\Gamma_2(\overline P_n)}{\Gamma_3(\overline P_n)}$.

\begin{lem} \label{action:generators}
For $k=1, 2, \ldots , n-1$ we let $\tau_k$ denote the transposition $(k,\,k+1)\in \sn$. We then have 
for all $1\leq r <s <t \leq n$: 
\begin{enumerate}
\item $\phi_2(\epsilon)( \alpha_{r,s,t}) = \alpha_{r,s,t}$\vspace{2mm}
\item $\phi_2(\omega_k)( \alpha_{r,s,t}) = \left\{ \begin{array}{ll}
\alpha_{\tau_k(r),\tau_k(s),\tau_k(t)} & \mbox{if } \tau_k(r) < \tau_k(s) < \tau_k(t)\\
\alpha^{-1}_{\tau_k(s),\tau_k(r),\tau_k(t)} & \mbox{if } \tau_k(s)< \tau_k(r) \\
\alpha^{-1}_{\tau_k(r),\tau_k(t),\tau_k(s)} & \mbox{if } \tau_k(t)< \tau_k(s) \\
\end{array} \
\right. $
\end{enumerate} 
\end{lem}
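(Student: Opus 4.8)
The plan is to compute the action of each generating automorphism on the basis element $\alpha_{r,s,t}$ directly from the definition $\alpha_{i,j,k} = \overline{[A_{i,j},A_{j,k}]}$, the already-established formula $\overline{\omega}_k(\bar A_{i,j}) = \bar A_{\tau_k(i),\tau_k(j)}$ from \reqref{images:phi1}, and the relations \reqref{relations:alpha}. The key point is that $\phi_2(\omega_k)$ and $\phi_2(\epsilon)$ are induced on $\Gamma_2(P_n)/\Gamma_3(P_n)$ by $\omega_k$ and $\epsilon$, so they are compatible with the projections; in particular $\phi_2(\omega_k)(\overline{[x,y]}) = \overline{[\omega_k(x),\omega_k(y)]}$, and this commutator is controlled modulo $\Gamma_3$ by the images of $x$ and $y$ modulo $\Gamma_2$, which is precisely what \reqref{images:phi1} records. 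The same remark applies to $\epsilon$, using $\overline{\epsilon}(\bar A_{i,j}) = -\bar A_{i,j}$, i.e.\ $\epsilon(A_{i,j}) \equiv A_{i,j}^{-1} \pmod{\Gamma_2(P_n)}$.

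For part (a): since $\epsilon(A_{i,j})$ is congruent to $A_{i,j}^{-1}$ modulo $\Gamma_2(P_n)$, the commutator $[\epsilon(A_{r,s}),\epsilon(A_{s,t})]$ is congruent modulo $\Gamma_3(P_n)$ to $[A_{r,s}^{-1},A_{s,t}^{-1}]$. Using the standard bilinearity of the commutator map $\Gamma_1/\Gamma_2 \times \Gamma_2/\Gamma_3 \to \Gamma_3/\Gamma_4$ — more precisely the identity $[a^{-1},b^{-1}] \equiv [a,b] \pmod{\Gamma_3}$ which holds in any group — one gets $\phi_2(\epsilon)(\alpha_{r,s,t}) = \overline{[A_{r,s}^{-1},A_{s,t}^{-1}]} = \overline{[A_{r,s},A_{s,t}]} = \alpha_{r,s,t}$. (The two sign changes cancel, which is the conceptual reason $\epsilon$ acts trivially here even though it is $-\id$ one level down.)

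For part (b): apply $\omega_k$ to $[A_{r,s},A_{s,t}]$ and pass to the quotient, obtaining via \reqref{images:phi1} that $\phi_2(\omega_k)(\alpha_{r,s,t}) = \overline{[A_{\tau_k(r),\tau_k(s)}, A_{\tau_k(s),\tau_k(t)}]}$, where indices are read using the convention $A_{j,i}=A_{i,j}$. Now $\tau_k$ preserves the relation $s$ is ``between'' $r$ and $t$ only in a weak sense: since $\tau_k=(k,k+1)$ swaps two consecutive values, applying it to the chain $r<s<t$ yields three numbers $\tau_k(r),\tau_k(s),\tau_k(t)$ that are still pairwise distinct but possibly out of order. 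There are exactly three cases, matching the three cases in the statement: either the order is preserved, or the first two are swapped (when $\tau_k(s) < \tau_k(r)$, which forces $\tau_k(r) = s+1$... i.e.\ $\{r,s\}=\{k,k+1\}$ with $r=k$), or the last two are swapped ($\tau_k(t) < \tau_k(s)$). In the order-preserving case the answer is immediately $\alpha_{\tau_k(r),\tau_k(s),\tau_k(t)}$. In the other two cases one rewrites $\overline{[A_{\tau_k(r),\tau_k(s)}, A_{\tau_k(s),\tau_k(t)}]}$ so that the subscripts are increasing and recognizes the result, via the relations \reqref{relations:alpha}, as $\alpha^{-1}$ of the reordered triple: when the first two are swapped we are looking at $\overline{[A_{b,a},A_{a,c}]} = \overline{[A_{a,b},A_{a,c}]}$ with $a<b<c$, which by the second relation in \reqref{relations:alpha} equals $\alpha_{a,b,c}^{-1}$; when the last two are swapped we get $\overline{[A_{a,c},A_{c,b}]} = \overline{[A_{a,c},A_{b,c}]}$ with $a<b<c$, which by the first relation in \reqref{relations:alpha} equals $\alpha_{a,b,c}^{-1}$. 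Substituting the explicit values of the reordered triples gives the three lines of part (b).

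The only mild obstacle is bookkeeping: one must check that the two non-trivial cases are genuinely exhaustive and mutually exclusive (they cannot both occur, since $\tau_k$ is a single transposition of consecutive integers, so it can disturb the relative order of at most one adjacent pair within $\{r,s,t\}$), and one must be careful that applying $\omega_k$ to a commutator and then projecting is legitimate — this is exactly the content of the earlier observation that $\phi_i(\alpha)$ is determined by $\phi_1(\alpha)$, specialized to $i=2$, together with the well-definedness of the commutator pairing on the associated graded. No deep input is needed beyond \reqref{images:phi1}, \reqref{relations:alpha}, and the elementary commutator identity $[a^{-1},b^{-1}]\equiv[a,b] \pmod{\Gamma_3}$.
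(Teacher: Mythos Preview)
Your proposal is correct and follows essentially the same route as the paper: compute $\phi_2$ on $\alpha_{r,s,t}$ by pushing the automorphism through the commutator, use that $\epsilon$ and $\omega_k$ send $A_{i,j}$ to $A_{i,j}^{-1}$ (resp.\ $A_{\tau_k(i),\tau_k(j)}$) up to a factor in $Z(P_n)\cdot\Gamma_2(P_n)$, and then invoke the relations~\reqref{relations:alpha}. The only cosmetic difference is that the paper writes the correction factor explicitly as lying in $Z(P_n)\cdot\Gamma_2(P_n)$ rather than just $\Gamma_2(P_n)$, but since central factors vanish in commutators this is immaterial, and your more detailed case analysis for part~(b) is a welcome expansion of what the paper leaves to the reader.
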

\begin{proof}
Recall that $\epsilon(A_{i,j}) = A_{i,j}^{-1} \gamma_{i,j}$ for some $\gamma_{i,j} \in Z(P_n)\cdot\Gamma_2(P_n)$. Hence 
\[ \phi_2(\epsilon) ( \alpha_{r,s,t}) = \overline{[ \epsilon(A_{r,s}) ,\epsilon( A_{s,t})]} =\overline{[ A_{r,s}^{-1} \gamma_{r,s}, A_{s,t}^{-1} \gamma_{s,t} ]} = 
 \overline{[ A_{r,s}^{-1} , A_{s,t}^{-1} ]} = \overline{[ A_{r,s} , A_{s,t}]}  \]
 showing that $\phi_2(\epsilon)$ is the identity map.
  
 From \eqref{images:phi1} we get that $\omega_k(A_{i,j}) = A_{\tau_k(i),\tau_k(j)} \delta_{i,j}$ for some $\delta_{i,j} \in 
  Z(P_n)\cdot \Gamma_2(P_n)$ and so
  \[ \phi_2(\omega_k) (\alpha_{r,s,t}) = \overline{[\omega_k(A_{r,s}), \omega_k(A_{s,t}) ]}= 
  \overline{ [ A_{\tau_k(r),\tau_k(s)},A_{\tau_k(s),\tau_k(t)}] }.
  \]
  The lemma now follows using the relations \eqref{relations:alpha}
\end{proof}
Note that since $\epsilon$ induces the identity map on $\frac{\Gamma_{2}(\barpn)}{\Gamma_{3}(\barpn)}$, it folllows that  $\im{\phi_2}\cong \sn[n+1]$. Under this isomorphism, the elements $\phi_2(\omega_k)$  ($1 \leq k \leq n$) correspond with the transpositions $\tau_k=(k, k+1)$. Generalizing the lemma above, we are able to give 
a very good  description of how the subgroup $\langle \phi_2(\omega_1), \phi_2(\omega_2), \ldots, \phi_2(\omega_{n-1})\rangle \cong \sn$ of $\im{\phi_2}$ acts on $\frac{\Gamma_2(\overline P_n)}{\Gamma_3(\overline P_n)}$.

\begin{prop}\label{prop:action}
Let $n\geq 4$. Consider an automorphism  $\alpha\in \aut{P_n}$ for which $\varphi_2(\alpha) \in \langle \phi_2(\omega_1), \phi_2(\omega_2), \ldots, \phi_2(\omega_{n-1})\rangle\cong \sn$. Let $\pi \in \sn$ be the permutation corresponding to $\phi_2(\alpha)$.   
Then
\begin{equation}
\phi_2(\alpha)( \alpha_{r,s,t} ) = 
\begin{cases}
\alpha_{\pi(r),\pi(s),\pi(t)} & \text{if $\pi(r)< \pi(s)< \pi(t)$}\\
\alpha_{\pi(s),\pi(t),\pi(r)} & \text{if $\pi(s)< \pi(t)< \pi(r)$}\\
\alpha_{\pi(t),\pi(r),\pi(s)} & \text{if $\pi(t)< \pi(r)< \pi(s)$}\\
\alpha^{-1}_{\pi(r),\pi(t),\pi(s)} & \text{if $\pi(r)< \pi(t)< \pi(s)$}\\
\alpha^{-1}_{\pi(s),\pi(r),\pi(t)} & \text{if $\pi(s)< \pi(r)< \pi(t)$}\\
\alpha^{-1}_{\pi(t),\pi(s),\pi(r)} & \text{if $\pi(t)< \pi(s)< \pi(r)$}\\
\end{cases}.
\end{equation}
\end{prop}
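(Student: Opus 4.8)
The plan is to first repackage the three relations in \eqref{relations:alpha} into a single symmetric piece of notation, and then to bootstrap the generator computation of Lemma~\ref{action:generators} to an arbitrary permutation. For pairwise distinct $a,b,c\in\{1,\dots,n\}$ I will write $\lambda_{a,b,c}$ for the image of $[A_{a,b},A_{b,c}]$ in $\Gamma_2(\overline P_n)/\Gamma_3(\overline P_n)$, where as usual $A_{x,y}=A_{y,x}$. By definition $\lambda_{i,j,k}=\alpha_{i,j,k}$ when $i<j<k$, and using the universally valid identity $[x,y]=[y,x]^{-1}$ together with the three relations in \eqref{relations:alpha}, a direct check shows that for $i<j<k$ one has
\[
\lambda_{i,j,k}=\lambda_{j,k,i}=\lambda_{k,i,j}=\alpha_{i,j,k},\qquad
\lambda_{i,k,j}=\lambda_{k,j,i}=\lambda_{j,i,k}=\alpha_{i,j,k}^{-1}.
\]
In words: $\lambda_{a,b,c}$ depends only on the cyclic order of $(a,b,c)$ and is inverted when two of the entries are transposed. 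This is the only place where \eqref{relations:alpha} is used in full, and checking the consistency of this notation is the one slightly delicate bookkeeping point of the argument.

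Next I will upgrade the generator action. Exactly as in the proof of Lemma~\ref{action:generators}, \eqref{images:phi1} gives $\omega_k(A_{i,j})=A_{\tau_k(i),\tau_k(j)}\,\delta_{i,j}$ with $\delta_{i,j}\in Z(P_n)\cdot\Gamma_2(P_n)$, and since such correction terms die inside a commutator modulo $\Gamma_3(P_n)$, this yields $\phi_2(\omega_k)(\lambda_{a,b,c})=\lambda_{\tau_k(a),\tau_k(b),\tau_k(c)}$ for $1\le k\le n-1$ and all distinct $a,b,c$. Now write $\pi=\tau_{k_1}\tau_{k_2}\cdots\tau_{k_m}$ as a product of adjacent transpositions. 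Since $\phi_2$ is a homomorphism and, under the isomorphism $\langle\phi_2(\omega_1),\dots,\phi_2(\omega_{n-1})\rangle\cong\sn$, the element $\phi_2(\alpha)$ corresponds to $\pi$, we have $\phi_2(\alpha)=\phi_2(\omega_{k_1})\circ\cdots\circ\phi_2(\omega_{k_m})$; applying this to $\lambda_{a,b,c}$ and iterating the previous identity gives
\[
\phi_2(\alpha)(\lambda_{a,b,c})=\lambda_{\pi(a),\pi(b),\pi(c)}\quad\text{for all pairwise distinct }a,b,c .
\]
Here one must be careful with the order of composition, and also observe that although $\alpha$ itself need not be a product of the $\omega_k$, its image $\phi_2(\alpha)$ is — which is all that is needed, since the claimed formula depends on $\alpha$ only through $\phi_2(\alpha)$.

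Finally, to conclude I will specialize to $(a,b,c)=(r,s,t)$ with $r<s<t$, so that $\phi_2(\alpha)(\alpha_{r,s,t})=\lambda_{\pi(r),\pi(s),\pi(t)}$; since $\pi(r),\pi(s),\pi(t)$ are pairwise distinct, the rewriting rules from the first step express this element in the basis $\mathcal B$, and the six possible relative orders of $\pi(r),\pi(s),\pi(t)$ produce exactly the six cases in the statement. For instance, if $\pi(s)<\pi(t)<\pi(r)$ then $(\pi(r),\pi(s),\pi(t))$ is a cyclic rotation of the increasing triple, whence $\lambda_{\pi(r),\pi(s),\pi(t)}=\alpha_{\pi(s),\pi(t),\pi(r)}$; if instead $\pi(r)<\pi(t)<\pi(s)$ then it is an orientation-reversing rearrangement, whence $\lambda_{\pi(r),\pi(s),\pi(t)}=\alpha_{\pi(r),\pi(t),\pi(s)}^{-1}$; the remaining four cases are handled identically. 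The main obstacle, such as it is, is purely notational: once $\lambda_{a,b,c}$ and its cyclic-invariance properties are in place, the rest is a routine verification, with the homomorphism and composition-order bookkeeping of the middle step being the only thing that can realistically go wrong.
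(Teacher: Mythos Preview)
Your proof is correct and follows essentially the same route as the paper's: both arguments reduce to the identity $\phi_2(\alpha)(\alpha_{r,s,t})=\overline{[A_{\pi(r),\pi(s)},A_{\pi(s),\pi(t)}]}$ obtained by iterating \eqref{images:phi1} along a word in the $\omega_k$'s, and then finish with the six orderings via \eqref{relations:alpha}. Your introduction of the auxiliary symbol $\lambda_{a,b,c}$ and its cyclic/anticyclic symmetry is a clean way to package the final case split, and your explicit remark that only $\phi_2(\alpha)$ (not $\alpha$ itself) need lie in $\langle\phi_2(\omega_1),\dots,\phi_2(\omega_{n-1})\rangle$ is exactly the point the paper handles by the phrase ``we can assume that $\alpha\in\langle\omega_1,\dots,\omega_{n-1}\rangle$''.
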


\begin{proof}
In Lemma~\ref{action:generators} we already proved this result in case $\alpha$ is equal to one of the 
generators $\omega_k$ (and where the corresponding permutation is  the transposition $\tau_k$). We can use the same technique now to prove the general proposition. We can assume that $\alpha \in \langle \omega_1, \omega_2, \ldots, \omega_{n-1} \rangle$ and by applying \eqref{images:phi1} inductively we find that $\alpha(A_{i,j}) = A_{\pi(i),\pi(j)} \gamma_{i,j}$ for some $\gamma_{i,j}\in Z(P_n) \cdot \Gamma_2(P_n)$. Just as in the proof of Lemma~\ref{action:generators}
we then find that 
\[ \phi_2(\alpha) ( \alpha_{r,s,t} ) = \overline{ [A_{\pi(r), \pi(s)}, A_{\pi(s), \pi(t)} ]}. \]
The result now follows by considering the six possibilities to order the three numbers $\brak{\pi(r), \pi(s), \pi(t)}$ and by using the relations \eqref{relations:alpha}.
\end{proof}

\section{The pure Artin braid group has the $R_{\infty}$ property}

In this section, we will prove the main result of our paper, i.e.~any pure braid group $P_n$ with $n\geq 3$ has the $R_\infty$ property.

\subsection{Low dimensional cases}

We have that $P_3=F_2\oplus \Z$, where $F_2$ is the free group on two generators and $\Z= Z(P_3)$. Hence $\overline{P}_3=F_2$ has the $R_{\infty}$ property, from which it follows that $P_3$ has the $R_\infty$ property.  

\begin{lem}\label{lem:p4}
Let $\alpha\in \aut{\Z^k}$, where $k\geq 1$. 
If $1$ or $-1$ is an eigenvalue of $\alpha$, then $R(\alpha)\geq 2$. 
\end{lem}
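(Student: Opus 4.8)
The statement concerns an automorphism $\alpha \in \aut{\Z^k}$ — equivalently a matrix $A \in GL_k(\Z)$ — having $1$ or $-1$ as an eigenvalue, and we want to conclude that the Reidemeister number $R(\alpha) \geq 2$. The key tool is the standard fact that for an endomorphism $\alpha$ of a finitely generated abelian group $G$, the Reidemeister classes are precisely the cosets of the subgroup $\im{\id - \alpha}$ in $G$, so $R(\alpha) = [G : \im{\id - \alpha}] = |\operatorname{coker}(\id - \alpha)|$ (interpreted as $\infty$ when the index is infinite). This holds because twisted conjugacy $x \sim zx\alpha(z)^{-1}$ becomes, in additive notation, $x \sim x + (z - \alpha(z))$, so two elements are twisted conjugate iff they differ by an element of $\im{\id - \alpha}$.

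I would first treat the case where $1$ is an eigenvalue of $A$. Then $\id - \alpha$ corresponds to the integer matrix $I - A$, which is singular over $\Q$ (since $\det(I - A) = 0$), hence has non-trivial kernel over $\Q$ and so rank strictly less than $k$. Therefore $\im{\id - \alpha}$ has infinite index in $\Z^k$ (a subgroup generated by fewer than $k$ vectors, or more precisely a subgroup of rank $< k$, cannot have finite index in $\Z^k$), giving $R(\alpha) = \infty \geq 2$.

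For the case where $-1$ is an eigenvalue of $A$: here $\det(I - A)$ need not be zero, so I cannot conclude the index is infinite. Instead I would argue that $2$ divides $\det(I - A)$, so $|\operatorname{coker}(\id - \alpha)| = |\det(I-A)|$ is even (in particular at least $2$) when it is finite, and when $\det(I - A) = 0$ the index is infinite as above; either way $R(\alpha) \geq 2$. To see the divisibility: reducing mod $2$, the matrix $I - A$ becomes $I + A \pmod 2$. Since $-1$ is an eigenvalue of $A$ over $\Q$ (equivalently over some field, but really we just need that $\det(A + I) = 0$ as $(-1)^k \det(-I - A) = \pm\det(A+I)$ and $-1$ being a root of the characteristic polynomial means $\det(A - (-1)I) = \det(A + I) = 0$), we get $\det(I - A) = \det(-(A+I) + 2I) \equiv \det(A+I) = 0 \pmod 2$. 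So $2 \mid \det(I-A)$, and since $R(\alpha) = |\det(I - A)|$ when this is nonzero, we get $R(\alpha) \geq 2$; when $\det(I-A) = 0$, $R(\alpha) = \infty$.

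The main obstacle is simply keeping the bookkeeping straight between the multiplicative group-theoretic formulation of Reidemeister classes and the additive/matrix formulation, and being careful that "$R(\alpha) = |\det(I-A)|$" is only valid when that determinant is nonzero (otherwise the Reidemeister number is infinite). Once the dictionary $R(\alpha) = |\operatorname{coker}(\id-\alpha)|$ is in place, both cases are short: eigenvalue $1$ forces infinite cokernel, eigenvalue $-1$ forces the finite cokernel (if finite) to have even order via the mod-$2$ determinant computation.
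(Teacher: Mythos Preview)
Your proof is correct, and in the $-1$-eigenvalue case it takes a genuinely different (and shorter) route than the paper.

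Both arguments dispose of the eigenvalue-$1$ case identically, noting that $\det(I-A)=0$ forces $R(\alpha)=\infty$. For the eigenvalue-$-1$ case, the paper proceeds structurally: it sets $V=\{z\in\Z^k:\alpha(z)=-z\}$, shows $\Z^k/V$ is torsion free so that $V$ is a rank-$\ell$ direct summand with $\ell>0$, and then block-triangularizes $A$ as $\left(\begin{smallmatrix}-I_\ell & \ast\\ 0 & A'\end{smallmatrix}\right)$ to compute $|\det(I-A)|=2^\ell\,|\det(I-A')|\geq 2^\ell$. Your argument instead reduces mod $2$: since $I-A\equiv I+A\pmod 2$ and $\det(I+A)=0$ (this is exactly the statement that $-1$ is an eigenvalue of the integer matrix $A$), one gets $\det(I-A)\equiv 0\pmod 2$, hence $R(\alpha)=|\det(I-A)|$ is either infinite or an even positive integer.

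The paper's approach yields the quantitatively stronger bound $R(\alpha)\geq 2^\ell$, but this is never used downstream; the application to $P_4$ only invokes $R(\alpha_k)\geq 2$. Your mod-$2$ determinant trick is more economical and avoids the splitting/basis-change argument entirely.
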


\begin{proof}
It is well known that $R(\alpha)=+\infty$ if $1$ is an eigenvalue of $\alpha$, so in this case certainly $R(\alpha)\geq 2$. So, from now onwards we assume that $1$ is not an eigenvalue of $\alpha$. In this case 
\begin{equation*}
R(\alpha) = \ord{\det{(I_k-A)}}
\end{equation*}
with $I_k$ the $k\times k$-identity matrix and $A$ is the matrix representing $\alpha\in \aut{\Z^k}\cong GL_k(\Z)$ with respect to a chosen free generating set $e_1,e_2,\ldots,e_k$ of $\Z^k$. 

\medskip

Let $-1$ be an eigenvalue of $\alpha$ and take $V=\set{z\in \Z^k}{\alpha(z)=-z}\neq \brak{0}$. 
Then $\frac{\Z^k}{V}$ is torsion free. Indeed assume that $\overline{z}$ is a torsion element in $\frac{\Z^k}{V}$, where $\overline{z}$ denotes the natural projection of an element $z\in \Z^k$ in $\frac{\Z^k}{V}$. Then there exists $k>0$ such that $k\overline{z}=\overline{0}$, or such that $kz\in V$. But this means that $\alpha(kz)=-kz$ and hence $k\alpha(z)=-kz$ which implies that $\alpha(z)=-z$ or $z\in V$ and so $\overline{z}=\overline{0}$. 
Hence $V\cong \Z^{\ell}$, with $\ell>0$, and $\frac{\Z^{k}}{V}\cong \Z^m$ for $0\leq m<k$. 

It follows that we can choose a free generating set $e_1,e_2,\ldots, e_{\ell},e_{\ell+1},\ldots,e_k$ of $\Z^k$ such that $V=\ang{e_1,e_2,\ldots, e_{\ell}}$. With respect to this generating set, $\alpha$ has a matrix representation 
\begin{equation*}
A=\left(
\begin{array}
[c]{cc}%
-I_{\ell} & \star \\
0 & A' 
\end{array}
\right)
\end{equation*}
with $I_{\ell}$ the $\ell\times \ell$-identity matrix and $A'$ a $(k-\ell)\times (k-\ell)$ integral matrix. Then 
\begin{equation*}
\begin{array}{rcl}
R(\alpha) & = & \ord{\det( I_k - A )}\\
 & = & \ord{\det{(2I_{\ell})}} \, \ord{ \det{ (I_k-A') } } \\
& = & 2^{\ell}\cdot \ord{ \det(I_k-A') }.\\
\end{array}
\end{equation*}
Since $1$ is not an eigenvalue of $A$ (and so also not of $A'$),  $0<\ord{ \det(I_k-A') }\in \Z$, therefore $R(\alpha)\geq 2^{\ell}$, with $\ell>0$.
\end{proof}

The following proposition seems to be well known to the experts. However, since we were unable to find a reference for this result, we present a proof here.

\begin{prop}\label{productformula}
Let $N$ be a finitely generated nilpotent group and $\alpha$ an endomorphism of $N$. Suppose that 
\[ 1=N_0 \subseteq N_1 \subseteq N_2  \subseteq  \cdots \subseteq N_{i-1} \subseteq N_{i} \subseteq \cdots \subseteq N_c=N\] 
is a central series with $\alpha(N_i)\subseteq N_i$ and $N_{i}/N_{i-1}$ torsion free for all $i=1,2,\ldots, c$.
Let $\alpha_i\colon N_i/N_{i-1} \to N_i/N_{i-1}$ denote the induced endomorphism. Then 
\[ R(\alpha)= \prod_{i=1}^c R(\alpha_i).\]
\end{prop}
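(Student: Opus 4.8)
The plan is to argue by induction on the length $c$ of the central series. When $c\le 1$ we have $N=N_1$, a finitely generated torsion-free abelian group, and $\alpha=\alpha_1$, so there is nothing to prove. For the inductive step, set $Z=N_1$: this is a finitely generated torsion-free abelian group, it is central in $N$, and $\alpha(Z)\subseteq Z$. Let $p\colon N\to Q:=N/Z$ be the quotient map and $\bar\alpha$ the induced endomorphism of $Q$. The subgroups $N_i/Z$ form a central series of $Q$ of length $c-1$ with torsion-free quotients $N_i/N_{i-1}$, preserved by $\bar\alpha$, on which the induced maps are $\alpha_2,\dots,\alpha_c$; so by the inductive hypothesis $R(\bar\alpha)=\prod_{i=2}^{c}R(\alpha_i)$, and it suffices to prove $R(\alpha)=R(\alpha_1)\cdot R(\bar\alpha)$. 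I will also use that $Q$ is torsion-free (a nilpotent group carrying a central series with torsion-free quotients is torsion-free) and that, since $Z$ is abelian, $\mathcal R(\alpha_1)=Z/\im{\id-\alpha_1}$, whence $R(\alpha_1)=|Z/\im{\id-\alpha_1}|$.

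The next step is to study the canonical surjection $\hat p\colon \mathcal R(\alpha)\to\mathcal R(\bar\alpha)$ induced by $p$ and to count its fibres. If $R(\bar\alpha)=\infty$ then $\mathcal R(\alpha)$ surjects onto an infinite set, so $R(\alpha)=\infty$, while $R(\alpha_1)\cdot R(\bar\alpha)=\infty$ too, so the formula holds; hence assume $R(\bar\alpha)=r<\infty$ and fix representatives $\bar x_1,\dots,\bar x_r$ of the classes of $\bar\alpha$, lifted to $x_1,\dots,x_r\in N$. A routine argument (twisted-conjugate any $g$ with $p(g)$ in the class of $\bar x_j$ by a lift of the conjugating element) shows every class of $\mathcal R(\alpha)$ lying over the class of $\bar x_j$ has a representative $a\,x_j$ with $a\in Z$. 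Let $\psi_j$ be the composite of $\alpha$ with the inner automorphism $g\mapsto x_j g x_j^{-1}$; then $\psi_j(Z)\subseteq Z$ with $\psi_j|_Z=\alpha_1$, and a direct computation using centrality of $Z$ gives: $a\,x_j$ and $a'\,x_j$ are twisted conjugate via $\alpha$ if and only if $a(a')^{-1}\in H_j:=\{\,k\,\psi_j(k)^{-1}\mid k\in p^{-1}(\operatorname{Fix}\bar\psi_j)\,\}$, where $\bar\psi_j$ is the endomorphism of $Q$ induced by $\psi_j$. Moreover $k\mapsto k\,\psi_j(k)^{-1}$ is a homomorphism of $p^{-1}(\operatorname{Fix}\bar\psi_j)$ into $Z$ (again by centrality of $Z$), so $H_j$ is a subgroup of $Z$ containing $\im{\id-\alpha_1}$; consequently $|\hat p^{-1}([\bar x_j])|=|Z/H_j|$.

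The heart of the argument, and the step I expect to be the main obstacle, is to show that $\operatorname{Fix}(\bar\psi_j)=1$ for each $j$, which forces $p^{-1}(\operatorname{Fix}\bar\psi_j)=Z$, hence $H_j=\im{\id-\alpha_1}$ and $|\hat p^{-1}([\bar x_j])|=R(\alpha_1)$. Here nilpotency is essential: since each $N_i$ is normal in $N$ and $N_i/N_{i-1}$ is central in $N/N_{i-1}$, the inner automorphism $g\mapsto x_j g x_j^{-1}$ preserves every $N_i$ and acts trivially on every quotient $N_i/N_{i-1}$; therefore $\bar\psi_j$ preserves the central series $(N_i/Z)_i$ of $Q$ and induces on its quotients the very same maps $\alpha_2,\dots,\alpha_c$ as $\bar\alpha$ does. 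If $\operatorname{Fix}(\bar\psi_j)$ were nontrivial, pick a nontrivial fixed element and let $i\ge 1$ be minimal with that element lying in $N_i/Z$; its nonzero image in $N_i/N_{i-1}$ is fixed by $\alpha_i$, so $\alpha_i$ has eigenvalue $1$ and $R(\alpha_i)=\infty$ (well known; see the proof of \relem{p4}), whence $R(\bar\alpha)=\prod_{i'=2}^{c}R(\alpha_{i'})=\infty$, contradicting $r<\infty$. Thus all $r$ fibres of $\hat p$ have exactly $R(\alpha_1)$ elements, so $R(\alpha)=\sum_{j=1}^{r}|Z/H_j|=r\cdot R(\alpha_1)=R(\bar\alpha)\,R(\alpha_1)$ (read as $\infty$ when $R(\alpha_1)=\infty$, so no separate case for an eigenvalue $1$ of $\alpha_1$ is needed). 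Combining with the inductive hypothesis yields $R(\alpha)=\prod_{i=1}^{c}R(\alpha_i)$, which completes the induction.
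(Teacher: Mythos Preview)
Your proof is correct and follows essentially the same strategy as the paper's: induct on $c$, reduce to showing $R(\alpha)=R(\alpha_1)\,R(\bar\alpha)$, and in the case $R(\bar\alpha)<\infty$ use the central series to show that any conjugating element must already lie in $N_1$ (equivalently, in your language, that $\operatorname{Fix}(\bar\psi_j)=1$), because otherwise some $\alpha_k$ with $k\ge 2$ would have eigenvalue~$1$. Your packaging via the twisted endomorphisms $\psi_j$ and the subgroups $H_j\le Z$ is a bit more systematic, and you derive the ``no eigenvalue~$1$'' fact self-containedly from the inductive hypothesis rather than citing \cite[Lemma~2.2]{DG} as the paper does, but the substance of the argument is the same.
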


\begin{proof}

If $c=1$, there is nothing to show. So suppose $c>1$ and let $\overline\alpha\colon N/N_1 \to N/N_1$ denote the induced endomorphism. By induction, it suffices to show that $R(\alpha)=R(\alpha_1) R(\overline\alpha)$. If $R(\overline\alpha) = \infty$ then also $R(\alpha)=\infty$ and there is nothing to show. So suppose that $R(\overline\alpha)<\infty$.  

Now, choose $y_j\in N$, with $j\in J$, such that their naturals projections $\bar{y}_j$ are representatives for the different Reidemeister classes of $\overline{\alpha}$ (so $J$ is a finite set). Choose also representatives $x_i$ ($i\in I$) in $N_1$ for the different Reidemeister classes of $\alpha_1$. We claim that the elements $x_i y_j$ ($i\in I$, $j\in J$) uniquely represent all the Reidemeister classes of $\alpha$. 

First we show that any element $g\in N$ is Reidemeister equivalent to one of the $x_iy_j$. Certainly, there is a $j\in J$ such that $\bar{g}$ is equivalent to $\bar{y}_j$. This implies that there is an $h\in N$ and an $x\in N_1$ such that 
$g = x h y_j \alpha(h)^{-1}$. As $x\in N_1$, there is a $z\in N_1 \subseteq Z(N)$ and an $i\in I$ with $x= z x_i \alpha(z)^{-1}$. As $x_i,\; z$ and $\alpha(z) \in Z(N)$, we get that $g = zh x_iy_j \alpha(zh)^{-1}$, showing that 
$g$ is equivalent to $x_iy_j$. 

Now assume that $x_{i_1} y_{j_1}$ and $x_{i_2} y_{j_2}$ are equivalent. By looking at the projection to $N/N_1$, it follows that $j_1=j_2=j$, so we are in the situation that $x_{i_1} y_j $ is equivalent to $x_{i_2} y_j$ and we need to show that $i_1=i_2$. So there must exist a $g\in N$ with $x_{i_1} y_j = g x_{i_2} y_j \alpha(g)^{-1}$. 
If $g\in N_1$, then also $\alpha(g)\in N_1$ and this equation reduces to $x_{i_1} = g x_{i_2} \alpha_1(g)^{-1}$, from which it  follows that $i_1=i_2$. So assume that $g \not\in N_1$ and let $k>1$ be the smallest integer such that $g \in N_k$.  From $x_{i_1}y_j = g x_{i_2}y_j \alpha(g)^{-1}$ we get, using the fact that $x_{i_1}, x_{i_2}\in Z(N)$ that 
\begin{eqnarray*}
x_{i_1}x_{i_2}^{-1}  & = & y_j^{-1} g y_j \alpha(g)^{-1} \\
 & \Downarrow & \\
 x_{i_1}x_{i_2}^{-1}  N_{k-1} & = & y_j^{-1} g y_j g^{-1} g \alpha_k(g)^{-1} N_{k-1} \\
  & \Downarrow & \\
  1 N_{k-1} & =  & g \alpha_k(g)^{-1} N_{k-1}
\end{eqnarray*}
where we used that $y_j^{-1} g y_j g^{-1} = [y_j, g^{-1}] \in N_{k-1}$ since we are working with a central series.
Hence $g N_{k-1}$ is a fixed point of $\alpha_{k}$. However, by \cite[Lemma 2.2]{DG}, the fact that $R(\overline\alpha)< \infty$ implies that $\alpha_k$ does not have 1 as an eigenvalue (recall that $\alpha_k$ is an endomorphism of a free abelian group $N_k/N_{k-1}$ of finite rank, say $r$,  and so we can view $\alpha_k$ as an $r\times r$ integral matrix). This implies that Fix$(\alpha_k)=1$. Hence $gN_{k-1} = 1 N_{k-1}$, which implies that $g \in N_{k-1}$, which is a contradiction. 
\end{proof}

\begin{thm}
$P_4$ has the $R_{\infty}$ property.
\end{thm}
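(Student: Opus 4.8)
The plan is to prove the $R_\infty$ property for $P_4$ by the same philosophy as for the higher rank cases: exploit the action of $\aut{P_4}$ on the subquotient $\Gamma_2(P_4)/\Gamma_3(P_4)$, which is a free abelian group of rank $\binom{4}{3}=4$. By Remark~\ref{technique} it suffices to show that for every $\alpha\in\aut{P_4}$ the induced automorphism $\phi_2(\alpha)$ on $\Gamma_2(\overline P_4)/\Gamma_3(\overline P_4)\cong\Z^4$ has $R(\phi_2(\alpha))=\infty$. As established in Section~\ref{sec-autos}, $\phi_2$ has image isomorphic to $\sn[5]$, with $\phi_2(\omega_1),\phi_2(\omega_2),\phi_2(\omega_3)$ corresponding to the transpositions $\tau_1,\tau_2,\tau_3\in\sn[4]\leq\sn[5]$ and $\phi_2(\epsilon)$ acting trivially. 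So the matrix group $\phi_2(\aut{P_4})\leq GL_4(\Z)$ is a quotient of $\sn[5]$, generated by the three explicit matrices coming from Lemma~\ref{action:generators} (for $\tau_1,\tau_2,\tau_3$) together with a fourth generator $\phi_2(\omega_4)$.

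First I would determine the four $4\times 4$ integer matrices explicitly: using Proposition~\ref{prop:action} with $n=4$ and the basis $\mathcal B=\{\alpha_{1,2,3},\alpha_{1,2,4},\alpha_{1,3,4},\alpha_{2,3,4}\}$, I would write down the matrices $M_1,M_2,M_3$ of $\phi_2(\omega_1),\phi_2(\omega_2),\phi_2(\omega_3)$ (signed permutation matrices, by the case analysis in the proposition). For $\phi_2(\omega_4)$ I would either use the formula for $\overline\omega_4$ from \cite[page~6]{BNS} pushed to $\Gamma_2/\Gamma_3$, or — more cleanly — use that $\im{\phi_2}\cong\sn[5]$ together with the relations among the $\omega_k$, so that $\phi_2(\omega_4)$ is forced as a specific element of the matrix group generated by $M_1,M_2,M_3$ and the $\sn[5]$-structure; in any case one obtains a representation $\rho\colon\sn[5]\to GL_4(\Z)$.

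Then I would argue that \emph{every} element $M=\rho(\pi)$ in this finite matrix group has $1$ or $-1$ as an eigenvalue. Since $\sn[5]$ is finite, each $M$ has finite order, so all its eigenvalues are roots of unity; it therefore suffices to show $\det(M)=\pm1$ forces — together with the specific $4$-dimensional representation at hand — a real eigenvalue. Concretely, decomposing the $4$-dimensional $\Q\sn[5]$-module into irreducibles, the only irreducibles of dimension $\leq 4$ are the trivial, the sign, and the two $4$-dimensional ones (standard and standard$\otimes$sign); so the module is either one $4$-dimensional irreducible, or a sum of the trivial/sign with a $3$-dimensional (which doesn't exist for $\sn[5]$) — hence it must be one of the two $4$-dimensional irreducibles. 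For each such irreducible and each conjugacy class of $\sn[5]$ one checks from the character table that the value of the character is such that $\pm1$ appears among the eigenvalues (equivalently, the eigenvalue multiset of $\rho(\pi)$, being Galois-stable and of size $4$ with entries roots of unity summing to the integer $\chi(\pi)\in\{-2,-1,0,1,2,4\}$, always contains $1$ or $-1$); alternatively, since $\phi_2(\epsilon)=\id$ but $\epsilon\notin\langle\omega_k\rangle$ shows the rank-$4$ module is faithful on $\sn[5]$ only through one of these two reps, one can just enumerate the $7$ conjugacy classes. Once $1$ or $-1$ is an eigenvalue of $\phi_2(\alpha)$, Lemma~\ref{lem:p4} gives $R(\phi_2(\alpha))\geq 2$ if the eigenvalue is $-1$, and $R(\phi_2(\alpha))=\infty$ if it is $1$; but in the former case I would instead invoke Proposition~\ref{productformula} applied to the full lower central series of $\overline P_n$ (or directly \cite[Lemma~2.2]{DG}) — more precisely, if $1$ is \emph{not} an eigenvalue of any $\phi_i(\alpha)$ then $R(\alpha)<\infty$ by the product formula, so to force $R(\alpha)=\infty$ it is enough that \emph{some} $\phi_i(\alpha)$ has eigenvalue $1$; I would therefore strengthen the eigenvalue claim to: for every $\alpha$, either $\phi_1(\alpha)$ or $\phi_2(\alpha)$ has $1$ as an eigenvalue. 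Since $\phi_1(\aut{P_4})\cong\sn[5]\times\Z_2$ acts on $\Z^{6}=P_4/\Gamma_2(P_4)$ and $\phi_2$ on $\Z^4$, I would run the character-table check on both representations simultaneously and observe that the $-1$-eigenvalue cases of $\phi_2$ (which arise from odd permutations via the signed-permutation matrices) are exactly covered by $\phi_1$ having a $1$-eigenvalue, or vice versa.

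The main obstacle I anticipate is the bookkeeping in the last step: writing down $\phi_2(\omega_4)$ (or otherwise pinning down the representation $\rho$ precisely) and then verifying, class by class in $\sn[5]$, that the relevant matrix always has $+1$ or $-1$ as an eigenvalue and that $+1$ occurs for $\phi_1$ or $\phi_2$ in every case. This is a finite and elementary check, but it requires care to match the combinatorial description of Proposition~\ref{prop:action} (the sign changes depending on the relative order of $\pi(r),\pi(s),\pi(t)$) with the representation-theoretic decomposition. An alternative that sidesteps identifying $\phi_2(\omega_4)$: note that $\langle\phi_2(\omega_1),\phi_2(\omega_2),\phi_2(\omega_3)\rangle\cong\sn[4]$ already acts on $\Z^4$ with the trivial representation appearing (since, e.g., the all-ones-type invariant vector $\alpha_{1,2,3}-\alpha_{1,2,4}+\alpha_{1,3,4}-\alpha_{2,3,4}$ or a similar signed sum is fixed by all of $\sn[4]$, as one checks directly from Proposition~\ref{prop:action}); combined with the fact that $\phi_2(\aut{P_4})$ is a finite group containing this $\sn[4]$ with index at most $\binom{5}{1}=5$, a short argument shows every element of $\phi_2(\aut{P_4})$ still has $\pm1$ as an eigenvalue, which — together with the $\phi_1$-analysis for the remaining $-1$ cases — completes the proof.
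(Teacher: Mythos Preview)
Your approach has a genuine gap that cannot be repaired by the bookkeeping you anticipate. The representation $\rho\colon \sn[5]\to GL_4(\Z)$ is indeed a $4$-dimensional irreducible (in fact the one attached to the partition $(2,1,1,1)$, the sign twist of the standard representation). For a $5$-cycle $\sigma\in \sn[5]$ the matrix $\rho(\sigma)$ has order $5$, and since the four primitive fifth roots of unity form a single Galois orbit, its eigenvalues are exactly $\zeta_5,\zeta_5^{2},\zeta_5^{3},\zeta_5^{4}$; \emph{neither} $+1$ nor $-1$ occurs. So your claim that ``every element of $\phi_2(\aut{P_4})$ has $\pm1$ as an eigenvalue'' is false already here, and $\phi_2$ alone cannot handle $5$-cycles.

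Adding $\phi_1$ does not rescue the argument. On $\overline P_4^{\,ab}\cong\Z^{5}$ a $5$-cycle does have eigenvalue $+1$ (any Galois-stable multiset of five fifth roots of unity contains $1$), but the automorphism $\epsilon\cdot(\text{$5$-cycle})$ acts on $\Z^{5}$ as the \emph{negative} of that matrix, with eigenvalues $-1,-\zeta_5,\ldots,-\zeta_5^{4}$, none equal to $+1$; and since $\phi_2(\epsilon)=\id$, the $\phi_2$-picture is unchanged. Thus for this automorphism neither $\phi_1$ nor $\phi_2$ has eigenvalue $+1$. Your use of Proposition~\ref{productformula} is also in the wrong direction: it gives $R(\bar\alpha_c)=\prod_{i\leq c}R(\alpha_i)$ for the nilpotent \emph{quotient}, hence only the lower bound $R(\alpha)\geq\prod R(\alpha_i)$; it never forces $R(\alpha)<\infty$.

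The paper avoids this obstruction by passing to a different characteristic quotient, the metabelianisation $G=\overline P_4/\overline P_4''$. By \cite[Theorem~1.1]{CS} one has $\Gamma_k(G)/\Gamma_{k+1}(G)\cong\Z^{5(k-1)}$ for all $k\geq3$, so for every even $k\geq4$ the rank is \emph{odd}; a finite-order automorphism of a free abelian group of odd rank must have a real eigenvalue, hence $\pm1$. Lemma~\ref{lem:p4} then gives $R(\alpha_k)\geq2$ for infinitely many $k$, and Proposition~\ref{productformula} applied to $G/\Gamma_{2\ell+1}(G)$ yields $R(\alpha)\geq R(\bar\alpha_{2\ell})\geq2^{\ell-1}$ for every $\ell$, whence $R(\alpha)=\infty$. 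The essential difference is that the paper never looks for a single level with eigenvalue $+1$; it exploits infinitely many levels, each contributing a factor of at least $2$.
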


\begin{proof}
Since $P_4''=\left[ [P_4,P_4], [P_4,P_4] \right]= \overline{P}_4''$ is characteristic in $P_4$, it is enough to show that for every $\alpha\in \aut{P_4}$ the induced action of $\alpha$ on the group 
$G=\frac{\overline P_4}{\overline{P}_4''}$ has infinite Reidemeister number. By \cite[Theorem 1.1]{CS} we know that the groups $\Gamma_k(G)/\Gamma_{k+1}(G)$ are free abelian for all $k\geq 1$ and that $\Gamma_k(G)/\Gamma_{k+1}(G)\cong \Z^{5(k-1)}$ for all $k\geq 3$. 
Now let $\alpha\in \aut{P_4}$ be an automorphism then $\alpha$ induces a series of automorphisms $\alpha_k\in \aut{\Gamma_k(G)/\Gamma_{k+1}(G)}$ and  $\overline{\alpha}_c\in \aut{G/\Gamma_{c+1}(G)}$. 
Since the factors $\Gamma_k(G)/\Gamma_{k+1}(G)$ are torsion-free, it follows from the proposition above that $R(\overline{\alpha}_c)=\prod_{k=1}^c R(\alpha_k)$. 

For even $k\geq 3$ we have that $\operatorname{rank}\left( \Gamma_k(G)/\Gamma_{k+1}(G) \right)$ is odd, because it is equal to $5(k-1)$.  Moreover, $\alpha_k$ is of finite order since we already know that the induced maps $\phi_k\colon \aut{P_n} \to \aut{\Gamma_k(\overline P_n)/\Gamma_{k+1} (\overline P_n)}$ have finite image. Hence $\alpha_k$ has a real eigenvalue, which must be $1$ or $-1$. For any $c$ we have $R(\alpha)\geq R(\overline{\alpha}_c)$. 
Applying \relem{p4} we obtain a second inequality
\begin{equation*}
R(\overline{\alpha}_{2\ell})\geq R(\alpha_4)R(\alpha_6)\cdots R(\alpha_{2\ell})\geq 2^{\ell-1}
\end{equation*}
and so we must have that $R(\alpha)\geq 2^{\ell-1}$ for all $\ell$. Hence $R(\alpha)=\infty$. 
\end{proof}

\subsection{General cases}

To prove that the groups $P_n$ with $n\geq 5$ have the $R_\infty$ property we will show that for any 
automorphism $\alpha\in \aut{P_n}$ the induced automophism $\phi_2(\alpha) \in \aut{\frac{\Gamma_2(\overline P_n)}{\Gamma_3(\overline P_n)}}$ has eigenvalue 1 and so $R(\phi_2(\alpha))=\infty$, from which it follows (see Remark~\ref{technique}) that $R(\alpha)=\infty$.

Recall from Section \ref{sec-autos} that $\phi_2$ factors through $\im{\phi_1}/\langle \overline \epsilon \rangle$ and so there exists a representation 
\begin{equation}\label{def-representation}
\rho\colon \frac{\im{\phi_1}}{\langle \overline \epsilon \rangle} \cong S_{n+1} \to  \aut{\frac{\Gamma_2(\overline P_n)}{\Gamma_3(\overline P_n)}} 
\end{equation}
such that $\phi_2= \rho \circ p $, where $p$ is the composition: 
\[ \aut{P_n} \stackrel {\phi_1}{\to} \im{\phi_1}= 
\langle \overline \omega_1, \ldots \overline \omega_n, \overline \epsilon\rangle \to 
\frac{\im{\phi_1}}{\langle \overline \epsilon \rangle}= \langle \overline \omega_1, \ldots \overline \omega_n\rangle\cong \sn[n+1].\] 
We can assume that under the isomorphim 
$\frac{\im{\phi_1}}{\langle \overline \epsilon \rangle} \cong S_{n+1} $, the element $\overline{\omega}_i$ corresponds to the transposition $(i,\, i+1)$. 

We know that  $\frac{\Gamma_2(\overline P_n)}{\Gamma_3(\overline P_n)}\cong \Z^{ n \choose 3}$, so we can view $\rho$ as a complex representation $\rho\colon S_{n+1} \to {\rm GL}( {n \choose 3} , \Z) \subset {\rm GL}( {n\choose 3} , \C) $. As a first step, we will explicitely determine this representation.

For this, we recall some background on the representation theory of symmetric groups. 
The irreducible representations of $\sn$ are indexed by partitions of $n$ \cite{JK},
where a partition $\lambda$ of $n$ is a sequence $(\lambda_1 , \lambda_2, \ldots, \lambda_{\ell})$ of positive integers, with $\lambda_1 \geq \lambda_2 \geq \cdots \geq \lambda_\ell$ and $\lambda_1 + \lambda_2 + \cdots + \lambda_\ell = n$.

Associated with a partition $\lambda$ as above is its \emph{Young diagram}, which is formally defined as the set
\begin{equation*}
Y(\lambda)=\set{(i,j)\in \Z^2}{1\leq i\leq \ell, \,1\leq j\leq \lambda_i}
\end{equation*}
and which we can visualise as a set of cells in the plane as follows:
\[
\begin{ytableau}
\mbox{ }   & \mbox{ } & \none[\cdots]  & \mbox{ }&  \mbox{ }& \mbox{ }&  \mbox{ }& \mbox{ } & \none & \none[\makebox(0,8){$\lambda_1$ cells}] \\
\mbox{ }   & \mbox{ } & \none[\cdots]  & \mbox{ }&  \mbox{ }& \mbox{ } & \none& \none & \none & \none[\makebox(0,8){$\lambda_2$ cells}]  \\
 \none[\vdots]  & \none[\vdots] & \none &  \none[\vdots] &  \none &\none & \none & \none & \none & 
 \none[\makebox(0,8){$\vdots$}] \\
 \mbox{ } & \mbox{ } & \none[\cdots]  & \mbox{ }&  \none& \none & \none  & \none & \none & \none[\makebox(0,8){$\lambda_\ell$ cells}]  \\
\end{ytableau}
\]
In the above picture, the element $(i,j)\in Y(\lambda)$ corresponds to the cell in the $i$-th row and the $j$-column, as an explicit example, the Young diagram of the partition $\lambda=(5,2,2,1)$ of 10 is given by 
\[ 
\begin{ytableau}
\mbox{} & \mbox{} & \mbox{} & \mbox{} & \mbox{} \\
\mbox{} & \mbox{} & \none & \none & \none \\
\mbox{} & \mbox{} & \none & \none & \none \\
\mbox{} & \none & \none & \none & \none \\
\end{ytableau}
\] 

There are two irreducible 1-dimensional representation of $\sn$ for every $n$. The first one is the trivial representation and the second one is the sign representation $\sign\colon \sn \to \{1,-1\}$, mapping even permutations to 1 and odd permutations to $-1$. For any representation $\psi\colon \sn \to {\rm GL}(m,\C)$, we get a new one of the same dimension $\sign \cdot \psi\colon \sn \to  {\rm GL}(m,\C)$, with $(\sign \cdot \psi) (\mu) =
\sign(\mu) \cdot \psi(\mu)$. The following is a trivial, but useful remark as we will see in a moment:

\begin{rem}\label{rem:evenperm}
If $\chi_{\psi}$ and $\chi_{\sign \cdot \psi}$ denote the characters related to a representation $\psi$ of $\sn$, then for any even permutation $\mu$ we have $\chi_{\psi}(\mu)=\chi_{\sign\cdot \psi}(\mu)$. 
\end{rem}

\begin{thm}\label{rho:known}
Let $n\geq 6$. The representation $\rho\colon S_{n} \to {\rm GL} ( {n-1 \choose 3 }, \C)$ which is determined 
by  $\phi_2\colon \aut{P_{n-1}} \to \aut{ \frac{\Gamma_2(\overline P_{n-1})}{\Gamma_3(\overline P_{n-1})}} $ 
as in \eqref{def-representation} is irreducible and the associated Young diagram  has shape $(n-3,1,1,1 )$.
\end{thm}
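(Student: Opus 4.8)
The strategy is to compute the character $\chi_\rho$ of the representation $\rho\colon S_n \to \mathrm{GL}({n-1 \choose 3}, \C)$ and compare it with the character of the known irreducible representation attached to the Young diagram $(n-3,1,1,1)$, which is the third exterior power $\bigwedge^3 V$ of the standard $(n-1)$-dimensional representation $V$ of $S_n$. First I would set up an explicit basis: by Proposition~\ref{prop:action}, $\rho$ is described on the generating transpositions $\tau_k=(k,k+1)$ by a signed-permutation action on the basis $\mathcal{B}=\set{\alpha_{i,j,k}}{1\leq i<j<k\leq n-1}$, where a triple $\{i,j,k\}$ is either fixed (when $\tau_k$ preserves the cyclic order, up to the conventions in the proposition) or sent to another basis element with a possible sign $-1$. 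The key observation is that this is exactly the action of $S_n$ on ordered-versus-cyclic triples that arises in $\bigwedge^3$ of the standard representation: identifying $\alpha_{i,j,k}$ (for $i<j<k\leq n-1$) with the wedge $e_i \wedge e_j \wedge e_k$ where $e_1,\dots,e_n$ is a natural basis attached to the $n$ points, the sign twists in Proposition~\ref{prop:action} are precisely the sign changes incurred when reordering a wedge product. I would make this identification precise and thereby show $\rho \cong \bigwedge^3 W$ for an appropriate $(n-1)$-dimensional $S_n$-module $W$.

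**Identifying $W$.** The subtle point is that $W$ is not literally the permutation module $\C^{n-1}$ but rather the standard module: the basis $\mathcal B$ has ${n-1 \choose 3}$ elements, which equals $\dim \bigwedge^3 V_{\mathrm{std}}$ where $V_{\mathrm{std}} = (n-1,1)$ has dimension $n-1$. (Note ${n-1\choose 3} \neq {n\choose 3}$, so the module "lives on $n-1$ symbols" from the $S_n$ point of view — this matches the shift in indices in the theorem statement.) So I would argue that the $S_n$-action on $\mathcal B$, read through Proposition~\ref{prop:action}, matches $\bigwedge^3$ of the standard representation $V_\lambda$ with $\lambda=(n-1,1)$. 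Then I would invoke the classical fact from the representation theory of symmetric groups that $\bigwedge^k V_{(n-1,1)}$ is the irreducible module indexed by the hook partition $(n-k, 1, \dots, 1)$ with $k$ ones; for $k=3$ this is exactly $(n-3,1,1,1)$, which has the correct dimension ${n-1\choose 3}$. Irreducibility of $\rho$ then follows immediately, completing the proof.

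**Main obstacle.** The hard part is pinning down exactly which $(n-1)$-dimensional $S_n$-module to wedge and verifying that the sign conventions in Proposition~\ref{prop:action} genuinely reproduce wedge-product signs rather than some other twist. A clean way to handle this without a messy basis-change computation is the character-theoretic route suggested by Remark~\ref{rem:evenperm}: compute $\chi_\rho(\mu)$ for a set of permutations $\mu$ sufficient to distinguish irreducibles — it suffices to evaluate on one generator (a transposition) and use that $\rho$ is a $\pm 1$-signed permutation representation, so $\chi_\rho(\mu)$ is (number of basis triples fixed with sign $+1$) minus (number fixed with sign $-1$) for the relevant $\mu$. Comparing these values with the character of $\bigwedge^3 V_{(n-1,1)}$, which is a known polynomial in the cycle-type data, and using that the sign representation twist does not affect values on even permutations (Remark~\ref{rem:evenperm}), one concludes $\chi_\rho = \chi_{(n-3,1,1,1)}$ and hence $\rho$ is irreducible of the claimed shape. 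The combinatorial bookkeeping of which triples $\{r,s,t\}$ are fixed by a given $\tau_k$ (essentially: those containing neither $k$ nor $k+1$, plus those containing both) and with which sign is the only genuinely computational step, and it is elementary given Proposition~\ref{prop:action}. The hypothesis $n\geq 6$ is needed so that the hook $(n-3,1,1,1)$ is a legitimate partition with all four rows present and so that the only proper quotients of $S_{n}$ are trivial, ruling out degenerate coincidences of characters.
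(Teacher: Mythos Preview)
Your idea --- identify $\rho$ with $\bigwedge^3 V_{(n-1,1)}$ and then cite the classical fact that this wedge power is the irreducible $V_{(n-3,1,1,1)}$ --- is conceptually attractive and genuinely different from the paper's argument, but as written it has a real gap. Proposition~\ref{prop:action} does \emph{not} describe the full $S_n$-action on $\mathcal B$: it only describes the subgroup $\langle \phi_2(\omega_1),\ldots,\phi_2(\omega_{n-2})\rangle\cong S_{n-1}$. The last generator $\omega_{n-1}$ (corresponding to the transposition $(n-1,n)\in S_n$) acts in a more complicated way that the paper explicitly declines to write down. Thus your proposed identification $\alpha_{i,j,k}\leftrightarrow e_i\wedge e_j\wedge e_k$ cannot be checked to be $S_n$-equivariant from the information available; in fact $\bigwedge^3 V_{(n-1,1)}$ has no basis indexed by triples from $\{1,\ldots,n-1\}$ on which all of $S_n$ acts by signed permutations. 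What Proposition~\ref{prop:action} actually gives you is $\rho|_{S_{n-1}}\cong \bigwedge^3\C^{n-1}$ (the permutation module), which is \emph{reducible}. Your character route hits the same wall: you can only evaluate $\chi_\rho$ on classes meeting $S_{n-1}$, and a single transposition is in any case far from enough to determine an irreducible.

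The gap can be closed, and doing so would give a cleaner proof than the paper's. From $\rho|_{S_{n-1}}\cong V_{(n-4,1,1,1)}\oplus V_{(n-3,1,1)}$, the branching rule shows that $(n-3,1,1,1)$ is the \emph{only} partition of $n$ whose restriction is this sum, and that for $n\ge 6$ no $S_n$-irreducible restricts to either summand alone; hence $\rho$ cannot split and must be $V_{(n-3,1,1,1)}$. By contrast, the paper argues computationally: it evaluates $\chi_\rho$ on five specific even permutations (all lying in $S_{n-1}$, hence computable from Proposition~\ref{prop:action}), uses Rasala's list of the minimal-degree irreducibles of $S_n$ to set up a linear system with no nonnegative solution (so $\rho$ cannot decompose into lower-dimensional pieces), and then evaluates on one transposition to rule out the sign-twist $V_{(4,1^{n-4})}$. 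The cases $6\le n\le 12$ are handled individually with explicit character tables.
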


\begin{proof}

We will divide the proof in two cases, namely the case for larger values of $n$ and the case for smaller values.

\medskip

We will first consider the situation in which $n\geq 13$ and at the end of the proof we explain how to deal with the remaining $n$ via an easy  case by case study, given some more details for $n=8$.

\medskip

In \cite[Result 3]{R}, for $n\geq 15$ a list of the first 7 minimal dimensions of irreducible representations of $S_n$ are given and are numbered as $(A)$, $(B)$, \ldots, $(F)$ and $(G)$. In this proof, we will only need the first 6 minimal dimensions and by checking explicitly the character tables of $S_{13}$ and $S_{14}$ (e.g.\ using GAP \cite{GAP}) one can see that also in these two cases the first six minimal dimensions are given by formulas $(A)$ to $(F)$. Of course, the first dimension $(A)$ is just 1 and one can see that the last dimension $(F)$ is ${n-1 \choose 3}$, the dimension of the representation $\rho$.  Each of these dimensions correspond to two representations say $\psi$ and $\sign \cdot \psi$.  
 
Let $\psi_A$ denote the trivial representation of $\sn$ and $\chi_A$ the trivial character. 
In  \cite[Table~5.1]{GG} we find the characters of one of the representations for each of the dimensions $(B)$ to $(F)$ (and $(G)$, which we don't need). Let us denote by $\psi_B, \ldots, \psi_F$ these representations and
let us denote by $\chi_B, \chi_C, \ldots, \chi_F$ the corresponding characters. We will use 
$\chi_{A'}, \chi_{B'}, \ldots, \chi_{F'}$ to denote the characters of the representations $\sign \cdot \psi_A$, 
$\sign \cdot \psi_B, \ldots, \sign \cdot \psi_F$.  
 
\medskip

Let $\mu \in \sn$. Then we can decompose $\mu$ into a disjoint cycle decomposition. We let $a_1$ denote the number of 1-cycles in this decomposition (or fixed points of $\mu$), $a_2$ the number of 2-cycles in this decomposition and $a_3$ the number of 3-cycles. In the table below, we summarise the information that we will 
use from \cite[Table~5.1]{GG}. (Note that $(A)$ is not present in  \cite[Table~5.1]{GG} and $(F)$ corresponds to the last line of the table and not to the second last line as one might expect). 

\begin{center}
\begingroup
\renewcommand{\arraystretch}{1.5}
\begin{tabular}{|c|c|c|}
\hline
Representation & Associated $\lambda$ & character evaluated in $\mu$ \\ \hline
$\psi_A$ & $ (n )$ & $ 1$ \\ \hline
$\psi_B$ & $(n-1,1)$ & $a_1-1$ \\ \hline
$\psi_C$ & $(n-2,2)$ & $\frac12 (2 a_2 + a_1(a_1-3)) $ \\ \hline
$\psi_D$ & $(n-2,1,1)$ & $\frac12(-2 a_2 + a_1^2 -3 a_1 +2)$ \\ \hline
$\psi_E$ & $(n-3,3)$ & $\frac16(6a_3+6a_2(a_1-1) + a_1(a_1-1)(a_1-5))$ \\ \hline
$\psi_F$ & $(n-3,1,1,1)$ & $a_3-a_2(a_1-1)+{a_1-1 \choose 3} $\\ \hline
\end{tabular}
\endgroup

\medskip

Table 1: Low dimensional representations of $\sn$
\end{center}

We will prove that $\chi_\rho= \chi_F$ and so $\rho$ is equivalent to $\psi_F$.
First we will show that $\rho$ is irreducible.
Suppose on the contrary that $\rho$ is reducible, then it must hold that  
$$
\chi_{\rho}  =  a\chi_A + a'\chi_{A'} + b\chi_B + b'\chi_{B'} + c\chi_C + c'\chi_{C'} + 
d\chi_D + d'\chi_{D'} + e\chi_E + e'\chi_{E'},
$$
for some non negative integers $a, a',b ,\ldots , e'$.  Let $x_a= a+a',\, x_b=b+b',\ldots, x_e=e+e'$, then by 
Remark~\ref{rem:evenperm} it holds that for any even permutation $\mu$ we have 
$$
\chi_{\rho}(\mu) =  x_a\chi_A(\mu) + x_b\chi_B(\mu) + x_c\chi_C(\mu) + x_d\chi_D(\mu) + x_e\chi_E(\mu).
$$
We will show that this is not possible by evaluating the above expression for some specific even permutations $\mu$. The formulas below are applications of Proposition~\ref{prop:action}.

\begin{itemize}
	\item Consider the element $\mu=\overline \omega_1\overline \omega_2$, in this case $\mu=(1\, 2\,3)$ and
	\begin{equation}
\rho(\mu)( \alpha_{r,s,t} )= 
\begin{cases}
\alpha_{1,2,3} & \text{if $r=1$, $s=2$ and $t=3$}\\
\alpha_{1,2,t}^{-1} & \text{if $r=1$, $s=3$ and $t\geq4$}\\
\alpha_{1,3,t}^{-1} & \text{if $r=2$, $s=3$ and $t\geq4$}\\
\alpha_{\mu(r),\mu(s),\mu(t)} & \text{otherwise}.
\end{cases}
\end{equation}
So, we have $\binom{n-4}{3}+1$ fixed elements.

\item Consider the element $\mu=\overline \omega_1\overline \omega_2\overline \omega_4 \overline \omega_5$, in this case $\mu=(1\, 2\, 3)(4\, 5\, 6)$ and
	\begin{equation}
\rho(\mu)( \alpha_{r,s,t} )= 
\begin{cases}
\alpha_{1,2,3} & \text{if $r=1$, $s=2$ and $t=3$}\\
\alpha_{4,5,6} & \text{if $r=4$, $s=5$ and $t=6$}\\
\alpha_{1,2,\mu(t)}^{-1} & \text{if $r=1$, $s=3$ and $t\geq4$}\\
\alpha_{1,3,\mu(t)}^{-1} & \text{if $r=2$, $s=3$ and $t\geq4$}\\
\alpha_{\mu(r),4,5}^{-1} & \text{if $r\leq 3$, $s=4$ and $t=6$}\\
\alpha_{\mu(r),4,6}^{-1} & \text{if $r\leq 3$, $s=5$ and $t=6$}\\
\alpha_{4,5,t}^{-1} & \text{if $r=4$, $s=6$ and $t\geq7$}\\
\alpha_{4,6,t}^{-1} & \text{if $r=5$, $s=6$ and $t\geq7$}\\
\alpha_{\mu(r),\mu(s),\mu(t)} & \text{otherwise}.
\end{cases}
\end{equation}
So, we have $\binom{n-7}{3}+2$ fixed elements.

\item Consider the element $\mu =\overline \omega_1\overline \omega_2\overline \omega_4\overline \omega_5\overline \omega_7\overline \omega_8$, in this case $\mu=(1\, 2\, 3)(4\, 5\, 6)(7\,  8\, 9)$ and
	\begin{equation}
\rho(\mu) (\alpha_{r,s,t} )= 
\begin{cases}
\alpha_{1,2,3} & \text{if $r=1$, $s=2$ and $t=3$}\\
\alpha_{4,5,6} & \text{if $r=4$, $s=5$ and $t=6$}\\
\alpha_{7,8,9} & \text{if $r=7$, $s=8$ and $t=9$}\\
\alpha_{r,r+1\mu(t)}^{-1} & \text{if $r=1,4,7$, $s=r+2$ and $t\geq r+3$}\\
\alpha_{r-1,r+1,\mu(t)}^{-1} & \text{if $r=2,5,8$, $s=r+1$ and $t\geq r+3$}\\
\alpha_{\mu(r),s,s+1}^{-1} & \text{if $s=4,7$, $t=s+2$ and $r<s$}\\
\alpha_{\mu(r),s-1,s+1}^{-1} & \text{if $s=5,8$, $t=s+1$ and $r<s-1$}\\
\alpha_{\mu(r),\mu(s),\mu(t)} & \text{otherwise}.
\end{cases}
\end{equation}
So, we have $\binom{n-10}{3}+3$ fixed elements.

	\item Consider the element $\mu =\overline \omega_1\overline \omega_3$, in this case $\mu=(1\, 2)(3\, 4)$ and
	\begin{equation}
\rho(\mu)( \alpha_{r,s,t} )= 
\begin{cases}
\alpha_{1,2,\mu(t)}^{-1} & \text{if $r=1$, $s=2$ and $t\geq3$}\\
\alpha_{2,3,4}^{-1} & \text{if $r=1$, $s=3$ and $t=4$}\\
\alpha_{1,3,4}^{-1} & \text{if $r=2$, $s=3$ and $t=4$}\\
\alpha_{3,4,t}^{-1} & \text{if $r=3$, $s=4$ and $t\geq5$}\\\alpha_{\mu(r),\mu(s),\mu(t)} & \text{otherwise}.
\end{cases}
\end{equation}
So, we have $\binom{n-5}{3}$ fixed elements and $2(n-5)$ twisted elements (elements mapped to their inverse).
\end{itemize}
Hence, we conclude that the character $\chi_{\rho}$  in these specific elements is given by:
\begin{itemize}
	\item $\chi_{\rho}(Id)=\binom{n-1}{3}$ (the dimension of the representation).
	\item $\chi_{\rho}(\overline \omega_1\overline \omega_2)=\binom{n-4}{3}+1$.
	\item $\chi_{\rho}(\overline \omega_1\overline \omega_2\overline \omega_4\overline \omega_5)=\binom{n-7}{3}+2$.
	\item $\chi_{\rho}(\overline \omega_1\overline \omega_2\overline \omega_4\overline \omega_5\overline \omega_7\overline \omega_8)=\binom{n-10}{3}+3$.
	\item $\chi_{\rho}(\overline \omega_1\overline \omega_3)=\binom{n-5}{3}-2(n-5)$.
\end{itemize}

Now, using the information of Table~1 we obtain the following five equations:

\begin{enumerate}
	\item $Id$. In this case we have $a_1=n$, $a_2=a_3=0$.
	$$
	\begin{array}{rcl}
	\chi_{\rho}(Id) & = & x_a + x_b\cdot(n-1) + x_c\cdot\frac{n(n-3)}{2} + x_d\cdot\frac{n^2-3n+2}{2} + x_e\cdot\frac{n(n-1)(n-5)}{6}
	\end{array}
	$$
	
	\item $\mu_1=\overline \omega_1\overline \omega_2$. In this case we have $a_1=n-3$, $a_2=0$, $a_3=1$.
	$$
	\begin{array}{rcl}
	\chi_{\rho}(\mu_1) & = & x_a + x_b\cdot(n-4) + x_c\cdot\frac{(n-3)(n-6)}{2} + x_d\cdot\frac{(n-3)^2-3(n-3)+2}{2}\\[2mm]
	& & + x_e\cdot\frac{(n-3)(n-4)(n-8)+6}{6}
	\end{array}
	$$
	
	\item $\mu_2=\overline \omega_1\overline \omega_2 \overline \omega_4 \overline \omega_5$. In this case we have $a_1=n-6$, $a_2=0$, $a_3=2$.
	$$
	\begin{array}{rcl}
	\chi_{\rho}(\mu_2) & = & x_a + x_b\cdot(n-7) + x_c\cdot\frac{(n-6)(n-9)}{2} + x_d\cdot\frac{(n-6)^2-3(n-6)+2}{2}\\[2mm] 
	& & + x_e\cdot\frac{(n-6)(n-7)(n-11)+12}{6}
	\end{array}
	$$
	
	\item $\mu_3= \overline \omega_1\overline \omega_2\overline \omega_4\overline \omega_5\overline \omega_7\overline \omega_8$. In this case we have $a_1=n-9$, $a_2=0$, $a_3=3$.
	$$
	\begin{array}{rcl}
	\chi_{\rho}(\mu_3) & = & x_a + x_b\cdot(n-10) + x_c\cdot\frac{(n-9)(n-12)}{2} + x_d\cdot\frac{(n-9)^2-3(n-9)+2}{2}\\[2mm]
	& & + x_e\cdot\frac{(n-9)(n-10)(n-14)+18}{6}
	\end{array}
	$$
	
	\item $\mu_4=\overline \omega_1\overline \omega_3$. In this case we have $a_1=n-4$, $a_2=2$, $a_3=0$.
	$$
	\begin{array}{rcl}
	\chi_{\rho}(\mu_4) & = & x_a + x_b\cdot(n-5) + x_c\cdot\frac{(n-4)(n-7)+4}{2} + x_d\cdot\frac{(n-4)^2-3(n-4)-2}{2}\\[2mm]
	& & + x_e\cdot\frac{(n-4)(n-5)(n-9)+12(n-5)}{6}
	\end{array}
	$$
	
\end{enumerate}
 
Solving this system of equations we find that the unique solution is $x_a=5-n$, $x_b=1$, $x_c=5-n$, $x_d=n-5$ and $x_e=1$. Since we are considering $n\geq 13$ we get that  $x_a=5-n<0$ which is a contradiction. 
So, $\rho$ is either (equivalent to) the representation $\psi_F$ or the representation $\psi_{F'}= \sign \cdot \psi_F$. Now we use the element $\mu=\overline \omega_1$ in order to be able to decide which of the two options is the correct one. 
Considering the action of $\mu=\overline \omega_1=(1\,2)$ as given by Lemma~\ref{action:generators}, we obtain
\begin{equation}\label{value:w1}
\chi_{\rho}(\overline \omega_1)=\binom{n-3}{3}-(n-3). 
\end{equation}
Notice that $\chi_{\rho}(\overline \omega_1)= \chi_{F'}(1\, 2)$ if and only if $\binom{n-3}{3}-(n-3)=(n-3) - \binom{n-3}{3}$ if and only if $n=2,3,7$. 
Since we are considering the case $n\geq 13$ we conclude that $\rho$ is equivalent to $\psi_F$. So the Young diagram related to $\rho$ is $(n-3,1,1,1)$:
\begin{center}
\ytableausetup{centertableaux}
\begin{ytableau}
  \mbox{ } & \mbox{ } &  \mbox{ } & \mbox{ } &  \none[\cdots] & \mbox{ } \\
\mbox{ } &\none & \none & \none & \none & \none \\
\mbox{ }	&\none & \none & \none & \none & \none \\
\mbox{ }&\none & \none & \none & \none & \none 
\end{ytableau}
\end{center}

We still have to deal with the cases $n=6,7,\ldots,12$. We can follow the same idea as in the general case, but now using  the explicit character table of $\sn$, in each case instead of the information of Table~1. These character tables can e.g.\ be found using GAP (\cite{GAP}).

We still need to show that $\rho$ is the representation $\psi_F$. 
So, as in the general case, we first check that $\rho$ can not be written as a sum of lower dimensional representations, so that $\chi_\rho$ cannot be written as a sum of characters of lower degree. In order to achieve this, we assume that $\chi_\rho$ can be written as a combination (with non negative integer coefficients) of characters of lower degree and by evaluating the expression on some well chosen even permutations we look for a contradiction.

For example, when $n=8$ and we consider the permutation $\mu=\overline \omega_1 \overline \omega_3$ we find the equation
\[
\chi_{\rho}(\mu)=-5=1x_1+3x_2+4x_3+4x_4+2x_5+1x_6.
\]
which clearly does not have any solution in nonnegative integers $x_1$, $x_2$, \ldots, $x_6$, showing that 
$\rho$ cannot be decomposed into representations of lower degree.

For all the other cases (with $6\leq n \leq 12$) we also very quickly obtain the same result (sometimes using another permutation, e.g.\ $\overline \omega_1\overline \omega_2\overline \omega_4\overline \omega_5$).

Now, let us consider the case $n=8$ again. We already know that $\rho$ is either equivalent to the representation $\psi_F$ or to the representation $\psi_{F'}= \sign \cdot \psi_F$. Now, $\chi_\rho(\overline \omega_1)=5$ (see formula~\eqref{value:w1}), $\chi_F(  \overline \omega_1) =5 $ and $\chi_{F'}( \overline \omega_1) =-5$, allowing us to conclude that $\rho$ is equivalent to $\psi_F$.

The same argument works in all other cases (where for $n=7$ it holds that $\psi_F$ is equivalent to $\psi_{F'}$ and so this last step does not have to be done). 

\end{proof}

Now we recall a combinatorial procedure due to Stembridge (\cite{S}) allowing us to compute the eigenvalues appearing in the irreducible representations of the symmetric group.
 
Let $\psi\colon G\to {\rm GL}(k,\C)$ be a representation of a finite group and let $g\in G$ be an element of order $m$. Then the eigenvalues of $\psi(g)$ are $m$-th roots of unity, and so are of the form $\omega^{e_1}, \omega^{e_2},\ldots, \omega^{e_k}$, with $\omega=e^{2\pi i/m}$ a primitive $m$-th root of unity. Stembridge refers to the  intergers $e_j\ (mod\, m)$ as the \emph{cyclic exponents} of $g$ with respect to the representation $\psi$. Using these cyclic exponents we form a generation function in the variable $q$:
\[ P_{\psi, g}(q) = q^{e_1} + q^{e_2} + \cdots + q^{e_k} \]
which is well defined mod $1-q^m$.

In order to describe this generation function in full detail for representations of the symmetric group we need the notion of a standard tableau of shape $\lambda=(\lambda_1, \lambda_2, \ldots, \lambda_\ell)$, where 
$\lambda$ is a partition of $n$ as before. Informally, a standard tableau of shape $\lambda$ is a distribution of the integers $1,2,\ldots, n$ over the cells of the Young diagram $Y(\lambda)$ in such a way that the numbers increase both in rows and in columns. Formally, a standard tableau of shape $\lambda$ is a one-to-one map $T\colon Y(\lambda) \to \{1,2,\ldots,n\}$ with 
\begin{enumerate}
\item $T(i,j) < T(i, j+1)$ for all $1\leq i \leq \ell$ and $1 \leq j  < \lambda_i$ (increasing rows)
\item $T(i,j) < T(i+1, j)$ for all $1\leq i < \lambda_\ell$ and $1 \leq j \leq \lambda_{i+1}$ (increasing columns)
\end{enumerate}
As an example, the following picture shows a standard tableau of size $\lambda=(5,2,2,1)$:
\[ 
\begin{ytableau}
1 & 2 & 5 & 9 & 10 \\
3& 6  & \none & \none & \none \\
4 & 8 & \none & \none & \none \\
7 & \none & \none & \none & \none \\
\end{ytableau}
\] 

The set of all standard tableaux of shape $\lambda$ is denoted by  ${\cal F}^{\lambda}$.
Let $T\in{\cal F}^{\lambda}$.  If the number $k+1$ ($1\leq k < n$) appears in a row strictly below $k$ in $T$, then $k$ is called a \emph{descent} of $T$, and $D(T)$ denotes  the set of all descents in $T$. So, for the 
example above, we have that $D(T)=\{2,\,3,\,5,\,6\}$.

Let $\mu=(\mu_1,\mu_2, \ldots, \mu_r )$ be a partition of $n$, and let $m=\operatorname{lcm}(\mu_1,\mu_2,\ldots, \mu_r)$ (the least common multiple of the numbers in the partition). Then $m$ is the   order of any permutation in $\sn$ of cycle--type $\mu$. We define an $n$--tuple $b_{\mu}=(b_{\mu}(1),\ldots ,b_{\mu}(n))$ of positive integers as follows
$$
b_{\mu} = \largeleft \underbrace{\frac{m}{\mu_1},\, \frac{2m}{\mu_1},\, \cdots,\, 
\frac{\mu_1 m}{\mu_1}=m}_{\mbox{$\mu_1$ terms}}, \,\underbrace{\frac{m}{\mu_2},\, \frac{2m}{\mu_2},\, \cdots,\, \frac{\mu_2 m}{\mu_2}=m}_{\mbox{$\mu_2$ terms}},\, \cdots,\,
\underbrace{\frac{m}{\mu_r},\, \frac{2m}{\mu_r},\, \cdots,\, \frac{\mu_r m}{\mu_r}=m}_{\mbox{$\mu_r$ terms}}
 \largeright.
$$
For example, when $\mu=(4,3,2,1)$, then $m=12$ and $b_\mu=(3,6,9,12,4,8,12,6,12,12)$.

Now consider two partitions $\lambda$ and $\mu$ of $n$.
For any standard tableau $T\in {\cal F}^\lambda$, we define the \emph{$\mu$-index} of $T$  as
$$
\operatorname{ind}_{\mu}(T) = \sum_{k\in D(T)} b_{\mu}(k)\ (mod\, m).
$$

So for the example of the standard tableau $T$ above with $\lambda=(5,2,2,1)$ and for $\mu=(4,3,2,1)$ we have that 
$$
\operatorname{ind}_{\mu}(T) = b_\mu(2) + b_\mu(3) + b_\mu(5) + b_\mu(6) \ (mod\, 12) = 6 + 9 +4 + 8\ (mod\, 12) = 3.
$$

Now, we have all the necessary background  to provide a good description of the generating function $P_{\psi,g}(g)$ in case $\psi$ is an irreducible representation of $\sn$. Since such a $\psi$ is determined 
by the associated partition $\lambda$ and the eigenvalues of $\psi(g)$ (and so also the generating function) only depend on the cycle--type $\mu$ of $g$, 
we will write 
$P_{\lambda,\mu}(q)$ to denote the cyclic exponent generating function corresponding to any permutation $w$ of $\sn$ of cycle--type $\mu$ for the irreducible representation of  $\sn$ corresponding to $\lambda$. This generating function (mod $1-q^m$) is completely determined by the following result.

\begin{thm}[{\cite[Theorem~3.3]{S}}]\label{th:s}
We have 
$$
P_{\lambda,\mu}(q) = \sum_{T\in {\cal F}^{\lambda}} q^{\operatorname{ind}_{\mu}(T)}\quad (\textrm{mod } 1-q^m);
$$
i.e., the cyclic exponents of $w$ with respect to $\lambda$ are the $\mu$-indices of the standard tableaux of shape $\lambda$.
\end{thm}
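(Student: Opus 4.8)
After choosing representatives in $\{0,1,\dots,m-1\}$ for the cyclic exponents $e_i$ and for the $\mu$--indices $\operatorname{ind}_{\mu}(T)$, both sides of the asserted equality are honest polynomials in $q$ of degree at most $m-1$, and such a polynomial is determined by its values at the $m$ distinct points $q=\omega^{0},\omega^{1},\dots,\omega^{m-1}$. So the plan is to compare both sides at the $m$--th roots of unity. Fix a permutation $w\in\sn$ of cycle--type $\mu$ and let $\psi$ be the irreducible representation of $\sn$ indexed by $\lambda$, with character $\chi^{\lambda}$. Since $w$ has order $m$, the matrix $\psi(w)$ is diagonalizable with eigenvalues $\omega^{e_1},\dots,\omega^{e_k}$ (where $k=\dim\psi=|{\cal F}^{\lambda}|$), so $\psi(w^{j})=\psi(w)^{j}$ has eigenvalues $\omega^{je_1},\dots,\omega^{je_k}$ and hence
\[
\sum_{i=1}^{k}\bigl(\omega^{j}\bigr)^{e_i}=\operatorname{tr}\psi(w^{j})=\chi^{\lambda}(w^{j}).
\]
Thus the left--hand side of the theorem, evaluated at $q=\omega^{j}$, equals $\chi^{\lambda}(w^{j})$, and since raising a $\mu_{\ell}$--cycle to the $j$--th power splits it into $\gcd(j,\mu_{\ell})$ cycles of length $\mu_{\ell}/\gcd(j,\mu_{\ell})$, this value depends only on the well--determined cycle--type $\mu^{(j)}$ of $w^{j}$. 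Consequently the theorem is equivalent to the family of character identities
\begin{equation}\tag{$\ast$}
\chi^{\lambda}\bigl(\mu^{(j)}\bigr)=\sum_{T\in{\cal F}^{\lambda}}\omega^{\,j\operatorname{ind}_{\mu}(T)}\qquad(j=0,1,\dots,m-1).
\end{equation}

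To prove $(\ast)$ I would pass to the theory of symmetric functions. Identity $(\ast)$ is immediate for $\mu=(1^{n})$, while for $\mu=(n)$ one has $b_{\mu}(k)=k$, so $\operatorname{ind}_{\mu}(T)=\sum_{k\in D(T)}k=\operatorname{maj}(T)\pmod n$ and $(\ast)$ becomes the theorem of Kraskiewicz and Weyman on the eigenvalues of an $n$--cycle (equivalently, the cyclic sieving phenomenon for standard tableaux under promotion); already this special case carries all of the genuine difficulty. In general, the Frobenius character formula gives $\chi^{\lambda}(\mu^{(j)})=\langle s_{\lambda},\,p_{\mu^{(j)}}\rangle$ with $s_{\lambda}$ the Schur function; equivalently, the multiplicity of $\omega^{k}$ as an eigenvalue of $\psi(w)$ equals $\langle\psi,\operatorname{Ind}_{\langle w\rangle}^{\sn}(\omega^{k})\rangle$ by Frobenius reciprocity, and $\operatorname{Ind}_{\langle w\rangle}^{\sn}(\omega^{k})$ has an explicit Frobenius characteristic --- a combination of products of power sums whose coefficients are Ramanujan--type sums of roots of unity. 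On the combinatorial side one would invoke Gessel's expansion of a Schur function into fundamental quasi--symmetric functions, $s_{\lambda}=\sum_{T\in{\cal F}^{\lambda}}F_{n,D(T)}$, together with the known behaviour of the $F_{n,S}$ under principal specialization (which, summed over $T$, yields $\sum_{T}q^{\operatorname{maj}(T)}$ and more generally the descent enumerators of standard tableaux). The sequence $b_{\mu}$ and the $\mu$--index are exactly the bookkeeping that matches the eigenvalue data of $w$ with such a descent enumerator: the blocks of $b_{\mu}$ record that the $\ell$--th cycle of $w$ contributes a complete set of $\mu_{\ell}$--th roots of unity, and the weights $b_{\mu}(k)$ are the ``degrees'' one attaches to the positions $k=1,\dots,n$ in the corresponding specialized alphabet.

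The main obstacle is precisely this last matching step, i.e.\ proving
\[
\frac{1}{m}\sum_{j=0}^{m-1}\omega^{-jk}\,\chi^{\lambda}\bigl(\mu^{(j)}\bigr)=\#\bigl\{\,T\in{\cal F}^{\lambda}\ :\ \operatorname{ind}_{\mu}(T)\equiv k\pmod m\,\bigr\}\qquad(0\le k<m),
\]
which is the discrete Fourier reformulation of $(\ast)$. Carrying it out demands a careful analysis of how the weak versus strict inequalities defining the monomials of $F_{n,S}$ interact with repeated root--of--unity values in a $\mu$--adapted alphabet, with all exponents read modulo $m$ (that is, modulo $1-q^{m}$); equivalently, it rests on a Robinson--Schensted type comparison of the descents of a standard tableau with the descents of the words that insert to it. This is the step that already makes the Kraskiewicz--Weyman case non--trivial, and where essentially all of the work of Stembridge's argument is concentrated. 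Once it is in place, $(\ast)$ holds for every $j$, and the interpolation remark of the first paragraph promotes it to the stated identity of generating functions modulo $1-q^{m}$.
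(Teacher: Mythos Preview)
The paper does not supply its own proof of this theorem: it is quoted verbatim from Stembridge~\cite{S} and used as a black box in the proof of \reth{ev}. So there is no argument in this paper against which to compare your attempt.

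As for the proposal itself, it is a plausible outline rather than a proof. The opening reduction is sound: both sides are well defined modulo $1-q^m$, evaluation at $q=\omega^{j}$ turns the left-hand side into $\chi^{\lambda}(w^{j})$, and the values at the $m$-th roots of unity determine the residue class modulo $1-q^m$. Your identification of the $\mu=(n)$ case with the major-index/Kra\'skiewicz--Weyman statement is also correct. However, you explicitly stop short of the decisive step: you name the Fourier reformulation and say that ``essentially all of the work of Stembridge's argument is concentrated'' there, without carrying it out. In other words, the proposal reduces the theorem to itself in a disguised form; the genuine content---matching the induced character $\operatorname{Ind}_{\langle w\rangle}^{\sn}(\omega^{k})$ with the descent enumerator via the $b_\mu$-weighting---is asserted rather than proved. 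If you want a self-contained argument you will have to either reproduce Stembridge's analysis of the relevant specialized symmetric functions or give an alternative bijective/RSK argument that actually establishes the Fourier identity you wrote down.
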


\reth{s} will be useful to prove that for the representation $\phi_2\colon \aut{P_n} \to 
\aut{
\frac
{\Gamma_2(\barpn[n-1])}
{\Gamma_3(\barpn[n-1])}
}$, the automorphism $\phi_2(\alpha)$ has 1 as an eigenvalue for all $\alpha\in \aut{P_n}$.

\begin{rem}
Consider $\lambda= (n-3, 1,1,1)$  and so the corresponding Young diagram $Y(\lambda)$ is: 
\begin{center}
\ytableausetup{centertableaux}
\begin{ytableau}
  \mbox{ } & \mbox{ } &  \mbox{ } & \mbox{ } & \none[\cdots] & \mbox{ } \\
\mbox{ }  & \none & \none & \none& \none & \none \\
\mbox{ }	 & \none & \none & \none & \none & \none\\
\mbox{ } & \none & \none & \none & \none & \none
\end{ytableau}
\end{center}
For every $i,j,k$ with $1\leq i<j<k< n$ there exists a standard tableau $T\in {\cal F}^\lambda$ with $D(T)=\brak{i,j,k}$ and all $D(T)$ are of this form. Indeed, take $T$ the unique standard tableau given by 
\begin{center}\scriptsize
\ytableausetup{centertableaux,boxsize=2.8em}
\begin{ytableau}
  \mbox{\scriptsize1 } &  \ast &  \ast & \ast & \none[\cdots] &  \ast \\
i+1   & \none & \none & \none& \none & \none \\
j+1	 & \none & \none & \none & \none & \none\\
k+1 & \none & \none & \none & \none & \none
\end{ytableau}
\end{center}
\end{rem}

\begin{thm}\label{th:ev}
Let $n\geq 6$ and let $\phi_2\colon \aut{P_{n-1}} \to \aut{\frac{\Gamma_2(\overline P_{n-1})}{\Gamma_3(\overline P_{n-1})}}$ be the induced morphism. For any $\alpha \in \aut{P_{n-1}}$, we have that  $\phi_2(\alpha)$ has 1 as an eigenvalue. 
\end{thm}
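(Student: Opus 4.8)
The plan is to convert the statement into a combinatorial question about standard Young tableaux and then settle it by an elementary case analysis on cycle types.

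First I would use the factorization $\phi_2 = \rho\circ p$ recalled above: for $\alpha\in\aut{P_{n-1}}$ we have $\phi_2(\alpha)=\rho(w)$ for the permutation $w=p(\alpha)\in\sn$, and by Theorem~\ref{rho:known} the representation $\rho$ is conjugate to the irreducible representation $\psi$ of $\sn$ indexed by the partition $\lambda=(n-3,1,1,1)$. Since having $1$ as an eigenvalue is a conjugacy invariant, it suffices to show that $\psi(w)$ has $1$ as an eigenvalue for \emph{every} $w\in\sn$. Let $\mu=(\mu_1,\dots,\mu_r)$ be the cycle type of $w$ and $m=\operatorname{lcm}(\mu_1,\dots,\mu_r)$ its order. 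Then $1$ is an eigenvalue of $\psi(w)$ exactly when $0$ occurs among the cyclic exponents of $w$ with respect to $\lambda$, i.e.\ when $P_{\lambda,\mu}(q)$ has nonzero constant term modulo $1-q^m$. By \reth{s}, together with the bijection $T\mapsto D(T)$ from ${\cal F}^\lambda$ onto the set of $3$-element subsets of $\brak{1,\dots,n-1}$ described in the preceding Remark, this reduces to the following purely numerical claim: for every partition $\mu$ of $n$ there exist integers $1\le i<j<k\le n-1$ with
\[ b_{\mu}(i)+b_{\mu}(j)+b_{\mu}(k)\equiv 0 \pmod m . \]
(Exhibiting one such triple suffices, since all coefficients of $P_{\lambda,\mu}$ are non-negative.)

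To produce the triple I would first record two elementary features of the tuple $b_\mu$: every ``block-end'' position $\mu_1,\ \mu_1+\mu_2,\ \dots$ carries the value $m\equiv 0\pmod m$; and in a block of length $\ell\ge 3$ the first, the penultimate and the last position carry the values $\tfrac m\ell,\ \tfrac{(\ell-1)m}\ell,\ m$, whose sum is $2m\equiv 0\pmod m$. Then I would split into cases. If $\mu_1\ge 3$ and $\mu$ has at least two parts, then $\mu_1\le n-1$ and the triple $\brak{1,\mu_1-1,\mu_1}$, which lies inside the first block and is made of distinct numbers because $\mu_1\ge 3$, works by the second observation. If $\mu=(n)$ is a single $n$-cycle, then $b_\mu(t)=t$ for all $t$, and $\brak{1,2,n-3}$ works, since $1+2+(n-3)=n\equiv 0\pmod m$ and $n\ge 6$ forces these to be distinct and $\le n-1$. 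Finally, if $\mu_1\le 2$ then every part of $\mu$ equals $1$ or $2$: if all parts equal $1$ then $m=1$ and any triple works; otherwise $m=2$, each $b_\mu(k)\in\brak{1,2}$, and a short count (there are at most $\lfloor n/2\rfloor$ positions carrying an odd value among the $\ge n-1\ge 5$ relevant positions) yields three positions whose value-sum is even. These cases exhaust all $\mu$, which finishes the argument.

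The conceptually substantial ingredient here is Theorem~\ref{rho:known}, which pins down $\rho$ as the representation attached to $(n-3,1,1,1)$; granting that and Stembridge's \reth{s}, everything above is routine. The only place needing a little care is making the case distinction exhaustive, and in particular handling the partitions all of whose parts are $\le 2$, where there is no long block to exploit and one must argue by hand — this includes the single slightly awkward partition $\mu=(2,2,2)$ when $n=6$, for which the triple $\brak{1,2,3}$ (with $b_\mu$-values $1,2,1$) does the job.
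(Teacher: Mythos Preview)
Your proposal is correct and follows essentially the same approach as the paper: reduce via Theorem~\ref{rho:known} and Stembridge's \reth{s} to exhibiting, for each cycle type $\mu$, a descent set $\{i,j,k\}\subset\{1,\dots,n-1\}$ with $b_\mu(i)+b_\mu(j)+b_\mu(k)\equiv 0\pmod m$, and then do a short case analysis on $\mu$. The paper uses the same triple $\{1,\mu_1-1,\mu_1\}$ when $3\le\mu_1<n$; for $\mu=(n)$ it takes $\{3,n-2,n-1\}$ (sum $2n$) where you take $\{1,2,n-3\}$ (sum $n$); and for $\mu_1\le 2$ it splits into ``exactly one transposition'' (triple $\{2,3,4\}$) and ``$\ell\ge 2$ transpositions'' (triple $\{1,2,3\}$), whereas you cover these together with a parity count --- both arguments are equally elementary, and your explicit check of $\mu=(2,2,2)$ at $n=6$ is exactly the borderline case.
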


\begin{proof}
Recall that $\phi_2$ factors through the representation $\rho\colon S_{n} \to {\rm GL}( {n-1 \choose 3} , \C)$.
We proved in Theorem~\ref{rho:known} that $\rho$ is equivalent to the irreducible representation of $\sn$ determined by the partition $\lambda=(n-3,1,1,1)$. So in order to prove the theorem, it suffices to prove that $\rho(\sigma)$ has eigenvalue 1 for all $\sigma \in \sn$. 

We shall consider four cases depending on the cycle--type $\mu$ of $\sigma$. 
Recall that the cycle--type $\mu$ of $\sigma$ is a  partition of $n$ of the form $\mu=(\mu_1, \mu_2,\ldots,\mu_r)$ where $\mu_1\geq \mu_2\geq \ldots\geq \mu_r\geq 1$. 
Let $m=\operatorname{lcm}(\mu_1, \mu_2,\ldots,\mu_r)$. 

\medskip

First we shall consider the case $\mu=(\mu_1, \mu_2,\ldots,\mu_r)$ such that $n>\mu_1\geq 3$. 
Hence $b_{\mu}=\left( \frac{m}{\mu_1}, \frac{2m}{\mu_1}, \cdots, \frac{(\mu_1-1)m}{\mu_1}, m,  \cdots \right)$. 
Consider the standard tableau 
$$\scriptsize
\ytableausetup{centertableaux, boxsize=3.2em}
\begin{ytableau}
  1 & 3 & 4 & \cdots & \mu_1-1 & \mu_1+2 & \cdots\\
  2 & \none & \none & \none & \none  & \none  & \none\\
	\mu_1 & \none & \none & \none & \none  & \none  & \none\\
	\mu_1+1 & \none & \none & \none & \none  & \none  & \none
\end{ytableau}
$$
So, $D(T)=\brak{1,\mu_1-1,\mu_1}$. Hence the $\mu$-index in this case is 
$$
\begin{array}{rcl}
\operatorname{ind}_{\mu}(T) & = &  b_{\mu}(1) + b_{\mu}(\mu_1-1) + b_{\mu}(\mu_1)\\
& = & \frac{m}{\mu_1} + \frac{(\mu_1-1)m}{\mu_1} + \frac{\mu_1m}{\mu_1} = 2m\\
& \equiv &  0\ (mod\, m).
\end{array}
$$
It follows from \reth{s}  that in this case $\rho(\sigma)$ has 1 as an eigenvalue.

\medskip

Let $\mu=(n)$, so $\sigma$ is  the full cycle. Then $m=n$, $\mu_1=n$ and $b_{\mu}=\left( 1,2,3,\ldots,n \right)$. 
Consider the standard tableau 
$$\scriptsize
\ytableausetup{centertableaux, boxsize=2.9em}
\begin{ytableau}
  1 & 2 & 3 & 5 & 6 & \cdots & n-2\\
  4 & \none & \none & \none & \none  & \none  & \none\\
	n-1 & \none & \none & \none & \none  & \none  & \none\\
	n & \none & \none & \none & \none  & \none  & \none
\end{ytableau}
$$
So, $D(T)=\brak{3,n-2,n-1}$. Hence the $\mu$-index in this case is 
$$
\begin{array}{rcl}
\operatorname{ind}_{\mu}(T) & = &  b_{\mu}(3) + b_{\mu}(n-2) + b_{\mu}(n-1)\\
& = & 3 + (n-2) + (n-1) = 2\mu_1=2n=2m\\
& \equiv&  0\ (mod\, m).
\end{array}
$$
\reth{s} implies that also in this case $\rho(\sigma)$ has 1 as an eigenvalue. 

\medskip

Now we shall consider the case $\mu=(2,1,\cdots,1)$, the case in which we have exactly one transposition.
So, $m=2$ and $b_{\mu}=( 1,2,2,2,\ldots, 2)$. 
Consider the standard tableau 
$$\scriptsize
\ytableausetup{centertableaux, boxsize=2.9em}
\begin{ytableau}
  1 & 2 & 6 & 7 & \cdots & n-1 & n\\
  3 & \none & \none & \none & \none  & \none  & \none\\
	4 & \none & \none & \none & \none  & \none  & \none\\
	5 & \none & \none & \none & \none  & \none  & \none
\end{ytableau}
$$
So, $D(T)=\brak{2,3,4}$. Hence the $\mu$-index in this case is 
$$
\begin{array}{rcl}
\operatorname{ind}_{\mu}(T) & = &  b_{\mu}(2) + b_{\mu}(3) + b_{\mu}(4)\\
& = & 2+2+2 = 6\\
& \equiv &  0\ (mod\, m).
\end{array}
$$
Again, \reth{s} allows us to conclude that also  in this case $\rho(\sigma)$ has 1 as an eigenvalue.

\medskip

Finally we shall consider the case $\mu=(2,2,\cdots,2,1,\ldots,1)$, the case in which we have $\ell\geq 2$ transpositions.
Hence $m=2$ and $b_{\mu}=( 1,2,1,2,\ldots,1,2,2,\ldots, 2)$ where the numbers $1,2$ are repeated $\ell$ times. 
Consider the standard tableau 
$$ \scriptsize
\ytableausetup{centertableaux, boxsize=2.9em}
\begin{ytableau}
  1 & 5 & 6 & 7 & \cdots & n-1 & n\\
  2 & \none & \none & \none & \none  & \none  & \none\\
	3 & \none & \none & \none & \none  & \none  & \none\\
	4 & \none & \none & \none & \none  & \none  & \none
\end{ytableau}
$$
So, $D(T)=\brak{1,2,3}$. Hence the $\mu$-index in this case is 
$$
\begin{array}{rcl}
\operatorname{ind}_{\mu}(T) & = & b_{\mu}(1) + b_{\mu}(2) + b_{\mu}(3)\\
& = & 1+2+1 = 4\\
& \equiv&  0\ (mod\, m).
\end{array}
$$
Again, by  \reth{s} we know that  $\rho(\sigma)$ has 1 as an eigenvalue.

Hence, given $n\geq 6$, for every element $\alpha \in \aut{P_{n-1}}$ it holds that $\phi_2(\alpha)$ has 1 as an eigenvalue.
\end{proof}

\begin{thm}
Let $n\geq 3$. The Artin pure braid group $P_{n}$ has the $R_{\infty}$ property.
\end{thm}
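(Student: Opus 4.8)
The plan is to split into the three ranges $n=3$, $n=4$ and $n\geq 5$, since the first two are genuinely different (the representation-theoretic machinery needs $n$ large). The case $n=4$ has already been settled in the theorem above, so nothing is left to do there. For $n=3$ I would simply invoke the decomposition $P_3\cong F_2\times\Z$ with $\Z=Z(P_3)$ characteristic in $P_3$: then $\barpn[3]=P_3/Z(P_3)\cong F_2$ is a non-abelian free group and hence has the $R_\infty$ property, so $R(\phi(\alpha))=\infty$ for every $\alpha\in\aut{P_3}$, which by Remark~\ref{technique} forces $R(\alpha)=\infty$.

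For $n\geq 5$ the plan is to feed the hypothesis into \reth{ev} with its parameter taken to be $n+1$ (so that $n+1\geq 6$, as required). This immediately yields that for every $\alpha\in\aut{P_n}$ the induced automorphism $\phi_2(\alpha)$ of the finitely generated free abelian group $\Gamma_2(\barpn)/\Gamma_3(\barpn)$ has $1$ as an eigenvalue. Next I would use the standard fact — recorded already inside the proof of \relem{p4} — that an automorphism of a finitely generated free abelian group having $1$ as an eigenvalue has infinite Reidemeister number; hence $R(\phi_2(\alpha))=\infty$. Finally, applying Remark~\ref{technique} with $i=2$ transfers this to the conclusion $R(\alpha)=\infty$, so $P_n$ has the $R_\infty$ property.

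Assembling the three cases then gives the statement for all $n\geq 3$. I do not expect any serious obstacle here: the substantive work is entirely contained in the earlier results, namely the identification of $\rho$ as the irreducible representation of $\sn[n+1]$ attached to the partition $(n-3,1,1,1)$ in \reth{rho:known} and the Stembridge-type eigenvalue computation in \reth{ev}. The only point needing care is the index bookkeeping — matching the group $P_n$ in this statement with the parameter $n-1$ appearing in \reth{ev}, and remembering that the representation-theoretic argument only applies once $n+1\geq 6$, which is exactly why the cases $P_3$ and $P_4$ have to be handled by the separate, elementary arguments above.
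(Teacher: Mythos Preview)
Your proposal is correct and follows essentially the same route as the paper: the cases $n=3$ and $n=4$ are delegated to the earlier arguments, and for $n\geq 5$ you invoke \reth{ev} (with the index shift you note) to get an eigenvalue $1$ for $\phi_2(\alpha)$, hence $R(\phi_2(\alpha))=\infty$, and then conclude via Remark~\ref{technique}. The only cosmetic point is that the paper's treatment of $n=3$ does not cite Remark~\ref{technique} (which sits in a section assuming $n\geq 4$) but simply uses the general principle that $R_\infty$ for the characteristic quotient $\barpn[3]\cong F_2$ forces $R_\infty$ for $P_3$; your use of it is still materially correct.
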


\begin{proof}
The cases $n=3$ and $n=4$ were already proven before, so let $n\geq 5$.

Let $\alpha\in \aut{P_n}$. By the previous theorem we now that $\phi_2(\alpha)$ acts 
on the free abelian group $\frac{\Gamma_2(\overline P_n)}{\Gamma_3(\overline P_n)}$ as a matrix with eigenvalue 1. This means that $R(\phi_2(\alpha))=\infty$, which implies that $R(\alpha)=\infty$ (Remark~\ref{technique}).
Since this holds for any automorphism of $P_n$, we have shown that $P_n$ has the $R_\infty$ property.
\end{proof}

\bigskip

\end{document}